\theoremstyle{plain}
\newtheorem{theorem}{Theorem}[section]
\newtheorem{lemma}[theorem]{Lemma}
\newtheorem{proposition}[theorem]{Proposition}
\newtheorem{corollary}[theorem]{Corollary}
\newtheorem*{gssconjecture}{Global Spherical Shell Conjecture}
\newtheorem*{theorem*}{Theorem}
\newtheorem*{corollary*}{Corollary}
\theoremstyle{definition}
\theoremstyle{remark}
\newtheorem{remark}{Remark}[section]
\numberwithin{equation}{section}
\numberwithin{figure}{section}
\title[Yamabe invariants of Inoue Surfaces]{The Yamabe invariants of Inoue Surfaces, Kodaira Surfaces, and their blowups}
\author{Michael Albanese}
\address{Universit\'e du Qu\'ebec \`a Montr\'eal \\ D\'epartement de math\'ematiques}
\email{michael.albanese@cirget.ca} 
\begin{document}

\maketitle

\begin{abstract} 
Shortly after the introduction of Seiberg-Witten theory, LeBrun showed that the sign of the Yamabe invariant of a compact K\"ahler surface is determined by its Kodaira dimension. In this paper, we show that LeBrun's Theorem is no longer true for non-K\"ahler surfaces. In particular, we show that the Yamabe invariants of Inoue surfaces and their blowups are all zero. We also take this opportunity to record a proof that the Yamabe invariants of Kodaira surfaces and their blowups are all zero, as previously indicated by LeBrun.
\end{abstract}

\section{Introduction}

The complex geometric properties of a compact Riemann surface are intimately related to its Riemannian geometry. By the Gauss-Bonnet theorem, the total scalar curvature of any Riemannian metric is a positive multiple of the Euler characteristic. It follows that the cases of Kodaira dimension $-\infty$, $0$, and $1$, corresponds precisely to the total scalar curvature being positive, zero, and negative.

In higher dimensions, the total scalar curvature of a Riemannian metric is no longer independent of the metric, and hence forms a functional on the space of all Riemannian metrics. After normalising the functional, one can use it to define the Yamabe invariant $Y(M)$ of a closed manifold $M$ via a min-max definition (see section 3), which coincides with the total scalar curvature when $M$ is a Riemann surface. In \cite{LeBrun2}, LeBrun showed that the relationship between the trichotomies for Riemann surfaces carries over to K\"ahler surfaces with the Yamabe invariant taking the place of total scalar curvature.

\begin{theorem*}
{\bf (LeBrun)} Let $M$ be a connected compact complex surface which admits a K\"ahler metric. Then $Y(M)$ is positive if $\kappa(M) = -\infty$, zero if $\kappa(M) = 0$ or $1$, and negative if $\kappa(M) = 2$.
\end{theorem*}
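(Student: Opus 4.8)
The plan is to argue case by case according to the Kodaira dimension, combining the Enriques--Kodaira classification of K\"ahler surfaces with Seiberg--Witten theory. Throughout I would use the standard characterization of the sign of the Yamabe invariant: $Y(M) > 0$ exactly when $M$ admits a metric of positive scalar curvature, $Y(M) \le 0$ exactly when it does not, and $Y(M) = 0$ exactly when $M$ admits no such metric but does admit conformal classes whose scalar curvature has arbitrarily small $L^2$ norm. I would also repeatedly invoke LeBrun's observation that on any conformal class $[g]$ with nonpositive Yamabe constant, $Y([g])^2 = \inf_{\tilde g \in [g]} \int_M s_{\tilde g}^2 \, dV_{\tilde g}$, so that bounds on $\int s^2$ convert directly into bounds on $Y$.

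For $\kappa(M) = -\infty$ the surface is rational or ruled; in the rational case the Fubini--Study metric, and in the ruled case a submersion metric over the base curve with sufficiently small $\mathbb{CP}^1$ fibers, has positive scalar curvature, forcing $Y(M) > 0$. For $\kappa(M) = 2$ the minimal model $X_{\min}$ is of general type with $c_1^2(X_{\min}) > 0$, and its K\"ahler structure produces a nonzero Seiberg--Witten invariant for the canonical spin${}^c$ structure. The resulting curvature estimate $\int_M s_g^2 \, dV_g \ge 32\pi^2 c_1^2(X_{\min})$, valid for every metric $g$, yields $Y(M) \le -4\pi\sqrt{2\, c_1^2(X_{\min})} < 0$ through the identity above; the blowup formula for Seiberg--Witten invariants guarantees that the same estimate, with the minimal model's $c_1^2$, persists for non-minimal $M$.

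The subtle case is $\kappa(M) \in \{0,1\}$, where I must prove both $Y(M) \le 0$ and $Y(M) \ge 0$. The upper bound follows once more from nonvanishing of the Seiberg--Witten invariant of the canonical spin${}^c$ structure (available since $K_M$ is pseudoeffective when $\kappa \ge 0$, and preserved under blowup), which rules out positive scalar curvature. For the lower bound I would treat the minimal model first. When $\kappa(M) = 0$ the minimal model is a K3 surface, a complex torus, an Enriques surface, or a bielliptic surface, each carrying a Ricci-flat K\"ahler or flat quotient metric and hence a scalar-flat metric, so $Y \ge 0$ immediately. When $\kappa(M) = 1$ the minimal model is properly elliptic with $c_1^2 = 0$, so Seiberg--Witten gives no lower curvature bound; here I would build a family of metrics adapted to the elliptic fibration that collapses the torus fibers, so that the total volume tends to zero while the scalar curvature stays bounded, giving $\int s^2 \to 0$ and therefore $Y(M) \ge 0$. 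Finally, to pass from minimal models to their blowups while keeping $Y \ge 0$, I would invoke the connected-sum estimate for the Yamabe invariant (a blowup is a connected sum with $\overline{\mathbb{CP}}^2$, which has positive Yamabe invariant), which preserves nonnegativity.

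I expect the principal obstacle to be the lower bound when $\kappa(M) = 1$: constructing the collapsing family and verifying that $\int s^2$ genuinely tends to zero requires controlling the geometry of the elliptic fibration near its multiple and singular fibers, and is not a formal consequence of the classification. Establishing the general-type curvature estimate and tracking the canonical Seiberg--Witten class through blowups are the other essential inputs, but these are by now standard and I would cite them rather than reprove them.
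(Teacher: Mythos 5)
The paper you were given never proves this statement: it is LeBrun's theorem, quoted as background and cited from \cite{LeBrun2} (it reappears as Theorem \ref{LeBrun} in the text), so there is no internal proof to compare against. What your proposal does is reconstruct, in substance faithfully, LeBrun's own published argument: the identity $Y(M)^2 = \inf_g \int_M s_g^2\, d\mu_g$ when $Y(M)\le 0$ (valid in dimension four), the Seiberg--Witten curvature estimate $\int_M s_g^2\, d\mu_g \ge 32\pi^2 c_1^2(X_{\min})$ with the blowup formula for $\kappa = 2$, Ricci-flat K\"ahler or flat-quotient metrics for minimal surfaces with $\kappa = 0$, collapsing of the elliptic fibration for $\kappa = 1$, and connected-sum estimates to pass to non-minimal surfaces; you also correctly identify the collapsing construction near multiple and singular fibers as the genuinely hard analytic step. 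Two points you gloss over deserve flags. First, ``nonvanishing of the Seiberg--Witten invariant of the canonical spin${}^c$ structure rules out positive scalar curvature'' is only a chamber-free statement when $b_+ > 1$; Enriques, bielliptic, and many properly elliptic surfaces have $b_+ = 1$, and there the exclusion of positive scalar curvature requires LeBrun's chamber/wall-crossing refinement (his notion of monopole classes), not Witten's nonvanishing theorem as usually stated. Second, in the $\kappa = -\infty$ case ``rational'' does not mean $\mathbb{CP}^2$: rational and ruled surfaces include iterated blowups, so beyond the Fubini--Study and submersion metrics you need Gromov--Lawson surgery (positive scalar curvature is preserved under connected sum with $\overline{\mathbb{CP}^2}$) to get $Y > 0$ for all of them. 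Both points are standard and citable, so I would class them as imprecisions in an otherwise correct reconstruction rather than fatal gaps.
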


As LeBrun pointed out, there is no higher dimensional analogue of this trichotomy; in particular, manifolds of general type can have positive Yamabe invariant in higher dimensions.

Unlike the case of Riemann surfaces, there exist complex manifolds in complex dimension two which do not admit K\"ahler metrics. With this in mind, a natural question to ask is whether the K\"ahler hypothesis of the above theorem is necessary. The main result of this paper is the following:

\begin{theorem*}
Let $M$ be an Inoue surface or a blownup Inoue surface. Then $Y(M) = 0$.
\end{theorem*}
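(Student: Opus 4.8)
The plan is to bracket the Yamabe invariant from both sides, using the standard fact that $Y(M)>0$ if and only if $M$ carries a metric of positive scalar curvature. Thus it suffices to prove (i) $Y(M)\ge 0$ and (ii) that $M$ admits no positive scalar curvature metric, the latter giving $Y(M)\le 0$. I would first dispose of the minimal case, an Inoue surface $S_M$, and then treat blowups by a connected sum argument.

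For the lower bound I would exploit the solvmanifold structure of $S_M$. Recall that $S_M=(\mathbb{H}\times\mathbb{C})/\Gamma$ is a mapping torus of a hyperbolic $M\in SL(3,\mathbb{Z})$; equivalently $\Gamma$ is a lattice in a solvable Lie group $G$, the semidirect product of $\mathbb{R}$ with $\mathbb{R}^3$ determined by $A=\log M$, whose eigenvalues are $\log\alpha$ and $\log|\beta|\pm i\theta$. Because $\det M=1$ we have $\operatorname{tr}A=0$, so $G$ is unimodular and the fibre tori have vanishing mean curvature. Consider the left-invariant metrics $g_L$ obtained by scaling the base direction $e_0$ by $L$, which replaces the structure matrix by $A/L$. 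Since the scalar curvature of a left-invariant metric on an almost-abelian solvable group is a homogeneous quadratic form $Q$ in the entries of the structure matrix, $s_{g_L}=Q(A)/L^2$ is a negative constant tending to $0$. Using the variational characterisation of the Yamabe constant with H\"older's inequality, any metric with $s_g\ge -C$ satisfies $Y([g])\ge -C\,\operatorname{Vol}(g)^{1/2}$; applying this with $C=|Q(A)|/L^2$ and $\operatorname{Vol}(g_L)=L\cdot\operatorname{Vol}(g_1)$ gives $Y([g_L])\ge -|Q(A)|\operatorname{Vol}(g_1)^{1/2}L^{-3/2}\to 0$. Hence $Y(S_M)\ge 0$.

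The upper bound amounts to showing $S_M$ carries no metric of positive scalar curvature, and this is the heart of the matter. Since $b_2(S_M)=0$ we have $b_+=0$, so the Seiberg--Witten obstruction -- the engine behind LeBrun's theorem -- is unavailable, and the homogeneous metric above, having negative scalar curvature, says nothing by itself. Instead I would use that $S_M$ is aspherical, with contractible universal cover $\mathbb{H}\times\mathbb{C}\cong\mathbb{R}^4$: concretely $S_M$ fibres over $S^1$ with incompressible $T^3$ fibre representing a generator of $H_3(S_M;\mathbb{Z})$ dual to the nonzero class in $H^1(S_M;\mathbb{Z})\cong\mathbb{Z}$. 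A positive scalar curvature metric would, by the Schoen--Yau minimal hypersurface descent, force a conformally rescaled positive scalar curvature metric on the stable minimal hypersurface in this class; but that hypersurface is essential and retains the toral fundamental group $\mathbb{Z}^3$, contradicting the classification of positive scalar curvature $3$--manifolds. Equivalently, one may simply invoke the nonexistence of positive scalar curvature metrics on closed aspherical $4$--manifolds. This yields $Y(S_M)\le 0$, and combined with the previous paragraph, $Y(S_M)=0$.

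Finally, let $\widetilde M=S_M\#\,k\,\overline{\mathbb{CP}}^2$ be a blowup. The no-psc argument survives: the class in $H^1$ and its dual $T^3$ live entirely in the $S_M$ summand, since $H^1(\overline{\mathbb{CP}}^2)=0$, so the minimal hypersurface may be taken disjoint from the exceptional divisors and the descent again produces a contradiction; thus $Y(\widetilde M)\le 0$. For $Y(\widetilde M)\ge 0$ I would feed the collapsing metrics $g_L$ into the Gromov--Lawson connected sum construction, attaching small positively curved copies of $\overline{\mathbb{CP}}^2$ along thin necks so that the scalar curvature stays bounded below by a constant tending to $0$; alternatively one cites the connected sum lower bound for the Yamabe invariant, which shows that gluing on the positive-invariant manifold $\overline{\mathbb{CP}}^2$ cannot push a zero invariant below $0$. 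Hence $Y(\widetilde M)=0$. The main obstacle throughout is step (ii) for $S_M$ itself: with $b_+=0$ gauge theory does not apply, so the absence of positive scalar curvature must be extracted from the aspherical solvmanifold geometry rather than from Seiberg--Witten invariants, which is precisely where the non-K\"ahler case departs from LeBrun's trichotomy.
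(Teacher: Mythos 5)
Your skeleton matches the paper's strategy: show $Y(M)\ge 0$ by collapsing (the paper uses Paternain--Petean $\mathcal{T}$-structures plus Kobayashi's connected sum theorem; your explicit left-invariant metrics with $s_{g_L}=Q(A)/L^2$ and the H\"older bound $Y(M,[g])\ge -\|(s_g)_-\|_{L^2}$ is a legitimate hands-on substitute, at least for the $S^{\pm}_M$ family), and then show no positive scalar curvature metric exists. But the no-psc half, which you correctly identify as the heart of the matter, has a genuine gap at exactly the point where the paper works hardest. When you run Schoen--Yau on a psc metric, geometric measure theory hands you a stable minimal hypersurface $\Sigma$ about which you know only its homology class: some nonzero multiple $n[F]$ of the fiber class (using $b_3=1$). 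Homologous to the fiber does not imply diffeomorphic, homotopy equivalent, or incompressible, and it certainly does not imply $\pi_1(\Sigma)$ contains $\mathbb{Z}^3$; your step ``that hypersurface is essential and retains the toral fundamental group $\mathbb{Z}^3$'' is unjustified. Moreover, since $b_1(M)=1$, there are no further one-dimensional classes to restrict to $\Sigma$, so the classical Schoen--Yau cohomological descent (as for $T^n$) cannot be run either. The paper's entire covering-space argument exists to fill precisely this hole: cut $M$ along $\Sigma$, form the associated infinite cyclic cover, identify it (using $b_1=1$, so the two epimorphisms $\pi_1(M)\to\mathbb{Z}$ agree up to sign) with the cover coming from the mapping torus structure, and thereby produce a map $\Sigma\to F$ of nonzero degree $n$. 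Then $\Sigma$ dominates the nilmanifold $F$, hence is enlargeable; being a closed orientable 3-manifold it is spin, so Gromov--Lawson forbids psc on $\Sigma$, contradicting Schoen--Yau.

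The blowup case exhibits the same problem in sharper form: the claim that ``the minimal hypersurface may be taken disjoint from the exceptional divisors'' is not something you can arrange---the minimizer in the class $n[F]$ inside $X\# k\overline{\mathbb{CP}^2}$ sits wherever it wants, possibly through the connected-sum necks. The paper's covering argument is deliberately insensitive to this: it uses only $b_1=1$, $b_3=1$, and the mapping torus structure of the minimal model, and works verbatim for blowups (the infinite cyclic cover is then $F\times\mathbb{R}$ connect-summed with a $\mathbb{Z}$-family of $\overline{\mathbb{CP}^2}$'s). Your fallback---nonexistence of psc on closed aspherical 4-manifolds (Chodosh--Li, Gromov)---does dispose of minimal Inoue surfaces, which are aspherical, but says nothing about blowups, which are not; it also postdates and is far deeper than what the paper uses. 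Note finally that your write-up only treats the $S^{\pm}_M$ family: the families $S^+_{N,p,q,r,t}$ and $S^-_{N,p,q,r}$ are mapping tori whose fiber is a Heisenberg-type nilmanifold rather than $T^3$, so any argument phrased in terms of ``the toral $\mathbb{Z}^3$'' must be reworked there. If you want a genuine shortcut for blowups, the correct one is the route the paper flags in its disclaimer but deliberately avoids: blowups admit a degree-one map onto the Inoue surface, enlargeability is closed under domination, and Cecchini--Schick removes the spin hypothesis from Gromov--Lawson.
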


As Inoue surfaces are non-K\"ahler surfaces with Kodaira dimension $-\infty$, we immediately conclude the following:

\begin{corollary*}
The K\"ahler hypothesis of LeBrun's Theorem is necessary.
\end{corollary*}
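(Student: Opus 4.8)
The plan is to establish the two inequalities $Y(M)\le 0$ and $Y(M)\ge 0$ separately, using the basic fact that $Y(M)>0$ if and only if $M$ admits a metric of positive scalar curvature. Thus $Y(M)\le 0$ is exactly the nonexistence of positive scalar curvature (PSC) metrics, while $Y(M)\ge 0$ will be obtained by producing conformal classes whose Yamabe constants tend to $0$ from below. The elementary input for the lower bound is that for any metric $g$ on a closed $n$-manifold,
\begin{equation*}
Y([g]) \ \ge\ \big(\min_M s_g\big)\,\mathrm{Vol}(g)^{2/n}\qquad\text{whenever }\min_M s_g\le 0,
\end{equation*}
which follows from the definition of the Yamabe constant by keeping only the zeroth-order term and applying H\"older's inequality; for a homogeneous metric with $s_g\le 0$ this is in fact an equality, $Y([g])=s_g\,\mathrm{Vol}(g)^{2/n}$.

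For the lower bound I would exploit the solvmanifold structure of an Inoue surface $M=G/\Gamma$. Choosing a basis $e_0,e_1,e_2,e_3$ of the solvable Lie algebra with $e_0$ tangent to the base $S^1$ and $e_1,e_2,e_3$ tangent to the $T^3$-fibres, consider the left-invariant metrics $g_\lambda$ obtained by stretching $e_0$ by $\lambda$ and shrinking each fibre direction by $\mu=\lambda^{-1}$. Since every bracket has the form $[e_0,e_i]=\sum_j c^j_{0i}e_j$, this rescaling replaces $c^j_{0i}$ by $\lambda^{-1}c^j_{0i}$, so the (constant, nonpositive) scalar curvature of $g_\lambda$ is $O(\lambda^{-2})$ while $\mathrm{Vol}(g_\lambda)=O(\lambda^{-2})$; hence $Y([g_\lambda])=s_{g_\lambda}\mathrm{Vol}(g_\lambda)^{2/n}\to 0^-$. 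Combined with $Y([g_\lambda])\le 0$ (no PSC, established below) this gives $\sup_{[g]}Y([g])=0$, i.e. $Y(M)\ge 0$. Equivalently, the $T^3$-fibration gives $M$ an $F$-structure of positive rank and a bounded-curvature collapse. For a blownup Inoue surface $M\#\overline{\mathbb{CP}}^2$ I would either attach a metrically small copy of $\overline{\mathbb{CP}}^2$ to the collapsing family along a thin neck, or invoke the connected-sum lower bound for the Yamabe invariant together with $Y(\overline{\mathbb{CP}}^2)>0$, to again obtain $Y\ge 0$.

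For the upper bound I would show that neither an Inoue surface nor its blowup admits a PSC metric. The cleanest route is enlargeability (or, for our purposes equivalently, the Schoen--Yau minimal-hypersurface descent): an Inoue surface is a closed aspherical manifold with universal cover $\mathbb{H}\times\mathbb{C}\cong\mathbb{R}^4$ and solvable fundamental group, and one produces $\varepsilon$-contracting maps of nonzero degree from finite covers to $S^4$ out of the affine geometry of this cover, so that $M$ is enlargeable and carries no PSC metric. Since enlargeability passes to connected sums, $M\#\overline{\mathbb{CP}}^2$ is enlargeable as well; crucially, this conclusion is insensitive to spinness, which matters because the blowups (and some Inoue surfaces) are not spin. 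The purely topological alternative is to observe that the composite $M\#\overline{\mathbb{CP}}^2\to M\to S^1$ has a regular fibre homologous to $T^3$, and that area-minimising in this class yields a stable hypersurface carrying conformally positive scalar curvature, contradicting the fact that $T^3$ has no PSC metric. Combining $Y(M)\le 0$ with $Y(M)\ge 0$ then gives $Y(M)=0$.

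The main obstacle is precisely this nonexistence of PSC metrics, above all on the non-spin blowups. Because the product metric on $\mathbb{H}\times\mathbb{C}$ is \emph{not} preserved by Inoue's affine action (the $\mathbb{C}$-factor is genuinely rescaled by the complex eigenvalue of the monodromy), these manifolds are not nonpositively curved, so enlargeability must be extracted from the exponential solvmanifold geometry rather than from a curvature bound. The non-spin blowups rule out any argument resting on the spin Dirac operator or on Seiberg--Witten theory (which is in any case awkward here, since $b_+=0$) and force the spin-independent enlargeability/minimal-hypersurface machinery; verifying the hypotheses of the Schoen--Yau descent -- in particular that the area-minimising representative is connected and carries a nonzero degree map to $T^3$ -- is the delicate technical point. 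A secondary care is required in the collapse construction, to confirm that the twisting by the complex eigenvalue of the monodromy does not introduce unbounded curvature as $\lambda\to\infty$.
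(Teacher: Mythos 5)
Your overall architecture (collapse for $Y\ge 0$, non-existence of PSC metrics for $Y\le 0$) matches the paper's, but the half that carries all the difficulty --- ruling out positive scalar curvature on the non-spin blowups --- is left open in both of the routes you sketch. Route (a) asserts that enlargeability obstructs PSC ``insensitive to spinness.'' That is not the classical Gromov--Lawson theorem: their obstruction requires a spin manifold (or at least a spin covering), and blowups of Inoue surfaces are enlargeable but \emph{not} spin. The spin-free statement is the recent theorem of Cecchini--Schick, built on Schoen--Yau's singular minimal-hypersurface compactification; the paper explicitly notes this would give an alternative proof and deliberately avoids it. Route (b) invokes the Schoen--Yau descent, but the stable minimal hypersurface produced by area-minimising in the class $[F]$ is \emph{not} a fibre: it need not be connected, need not be diffeomorphic to $T^3$ (or to the fibre $F$ at all), and a priori carries no relation to $F$ beyond its homology class. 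You correctly flag that proving it ``carries a nonzero degree map'' to the fibre is the delicate point, but you do not supply that step --- and that step is precisely the paper's contribution. The paper closes it by showing $[F]$ is primitive and generates $H_3$ (using $b_3=1$), that $M\setminus\Sigma$ is connected, then cutting along $\Sigma$ and gluing $\mathbb{Z}$-many copies to build an infinite cyclic cover of $M$; since $b_1(M)=1$, this cover must coincide with the pullback of the mapping-torus cover $F\times\mathbb{R}$ (connect-summed with copies of $\overline{\mathbb{CP}^2}$), and projecting to $F$ gives a degree-$n\neq 0$ map $\Sigma\to F$. Hence $\Sigma$ dominates a nilmanifold, is enlargeable, and is spin (a closed orientable $3$-manifold), so the classical obstruction applies to $\Sigma$ rather than to $M$ --- that transfer from the non-spin $4$-manifold to a spin $3$-manifold is the idea your proposal is missing.

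Two smaller points. First, only the families $S_M^{\pm}$ are $T^3$-bundles over $S^1$; the families $S^+_{N,p,q,r,t}$ and $S^-_{N,p,q,r}$ fibre with fibre a Heisenberg nilmanifold (the Chern class $r$ circle bundle over $T^2$), so both your collapse and your ``homologous to $T^3$'' statements need $F$ in place of $T^3$. This also breaks the uniform shrinking in your curvature estimate: with $[e_1,e_2]=r\,e_3$ in the fibre, scaling all fibre directions by $\mu=\lambda^{-1}$ rescales that structure constant by $\lambda$, not $\lambda^{-1}$, so the curvature blows up unless you collapse anisotropically (shrinking the centre direction faster). The paper sidesteps these computations entirely by citing Paternain--Petean's $\mathcal{T}$-structure theorem for the minimal surfaces and Kobayashi's connected-sum theorem (or again Paternain--Petean) for the blowups, which is also the cleanest repair of your lower bound.
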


In addition, we compute the Yamabe invariant of Kodaira surfaces and their blowups, as previously indicated by LeBrun.

{\bf Disclaimer.} The results of this paper were proved during the author's PhD and were conceived prior to some recent results which give rise to different proofs of Theorem \ref{Inouepsc} and Theorem \ref{Kodairapsc} which we outline here. Inoue surfaces and Kodaira surfaces are solvmanifolds as shown by Hasegawa \cite{Hasegawa}, so they are enlargeable in the sense of Gromov and Lawson \cite{GL1}. Using Schoen and Yau's upgraded minimal hypersurface technique \cite{SY17}, Cecchini and Schick \cite{CS} have proved that a compact enlargeable manifold cannot admit a metric of positive scalar curvature -- prior to this, one needed the additional assumption that the manifold was spin (note that blowups of Inoue surfaces and Kodaira surfaces are enlargeable, but not spin). The result of Cecchini and Schick in low dimensions (namely, at most $8$) can be deduced from Gromov and Lawson \cite[Theorem 12.1]{GL2}, at least for compactly enlargeable manifolds. None of these results are employed here, instead we use a different argument which may be of independent interest. 

{\bf Acknowledgements.} First and foremost, I would like to thank my former advisor Claude LeBrun for introducing me to this problem and guiding me during this work. I would also like to thank Blaine Lawson for answering my questions regarding his work on metrics of positive scalar curvature. The exposition has benefited from illuminating conversations with Georges Dloussky and S\"onke Rollenske, many of which took place at the Banff International Research Station for Mathematical Innovation and Discovery where I presented these results during the workshop ``Bridging the Gap between K\"ahler and non-K\"ahler Complex Geometry". Finally, thanks to the anonymous referee for their helpful suggestions.

\section{Compact Complex Surfaces}

Recall that for a connected compact complex $n$-dimensional manifold $X$, its \textit{Kodaira dimension}, denoted by $\kappa(X)$, is defined to be $-\infty$ if $h^0(X, K_X^{\ell}) = \dim H^0(X, K_X^{\ell}) = 0$ for all $\ell > 0$; here $K_X$ denotes the canonical bundle of $X$. Otherwise, the Kodaira dimension of $X$ is given by
$$\kappa(X) = \limsup \frac{\log h^0(X, K_X^{\ell})}{\log \ell}.$$
The Kodaira dimension takes values in $\{-\infty, 0, 1, \dots, n\}$, and if $X$ has maximal Kodaira dimension, i.e. $\kappa(X) = n$, we say that $X$ is of \textit{general type}.

Now suppose $n = 2$. By the Kodaira-Enriques classification, the surface $X$ has a minimal model in precisely one\footnote{$X$ need not have a unique minimal model, but any two minimal models belong to the same class.} of the following 10 classes, organised in the table below.

\begin{center}\begin{tabular}{|c|c|c}
\hline
$\kappa(X)$ & $b_1(X)$ even & \multicolumn{1}{c|}{$b_1(X)$ odd} \\ \hline
$-\infty$ & \begin{tabular}{c} Rational\\ Ruled\end{tabular} & \multicolumn{1}{c|}{Class $\mathrm{VII}_0$} \\ \hline
$0$ & \begin{tabular}{c} $K3$\\ Enriques\\ Tori\\ Hyperelliptic\end{tabular} & \multicolumn{1}{c|}{Kodaira}  \\ \hline
$1$ & Properly Elliptic  & \multicolumn{1}{c|}{Properly Elliptic}  \\ \hline
$2$ & General Type  &                        \\ \cline{1-2}
\end{tabular}\end{center}

The parity of $b_1(X)$ is of interest because it determines when $X$ admits a K\"ahler metric: a connected compact complex surface admits a K\"ahler metric if and only if $b_1(X)$ is even. This was initially proved using the Kodaira-Enriques classification with the final case of $K3$ surfaces completed by Siu \cite{Siu}, while a proof without using the classification was obtained independently by Buchdahl \cite{Buchdahl} and Lamari \cite{Lamari}.

We now turn our attention to the two classes of interest for this paper, namely class $\mathrm{VII}_0$ surfaces and Kodaira surfaces.

\subsection{Class $\mathrm{VII}_0$ Surfaces}

A connected compact complex surface $X$ is said to be a \textit{class $\mathrm{VII}$ surface} if $\kappa(X) = -\infty$ and $b_1(X) = 1$; furthermore, if $X$ is not the blowup of another complex surface (i.e. $X$ is \textit{minimal}), then it is said to be a \textit{class $\mathrm{VII}_0$ surface}.

The following theorem regarding class $\mathrm{VII}_0$ surfaces with $b_2 = 0$ first appeared in work by Bogomolov \cite{Bogomolov1}, \cite{Bogomolov2}. It was later reproved by Teleman \cite{Teleman0}, also see work by Li, Yau, and Zheng \cite{LYZ}.

\begin{theorem}
\label{b_2=0}
A class $\mathrm{VII}_0$ surface with $b_2 = 0$ is biholomorphic to a Hopf surface or an Inoue surface.
\end{theorem}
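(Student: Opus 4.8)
The plan is to prove the classification through a dichotomy according to whether $X$ contains a compact complex curve. First I would record the numerical consequences of the hypotheses $\kappa(X) = -\infty$, $b_1(X) = 1$, and $b_2(X) = 0$. Poincar\'e duality gives $b_3(X) = b_1(X) = 1$ and $b_4(X) = b_0(X) = 1$, so $\chi(X) = 0$; since $b_2(X) = 0$ forces $b_2^+(X) = b_2^-(X) = 0$, the signature is $\tau(X) = 0$ and hence $c_1(X)^2 = 2\chi(X) + 3\tau(X) = 0$. In particular $H^2(X;\mathbb{R}) = 0$, so the intersection form vanishes identically and every compact curve $C \subset X$ satisfies $C \cdot C = 0$ and $K_X \cdot C = 0$ (as $K_X$ is numerically trivial). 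Being non-K\"ahler, $X$ has algebraic dimension at most one, and if $X$ contains no curve then its algebraic dimension is zero. These constraints are used throughout both cases.

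\textbf{Case 1: $X$ contains no compact curve.} Here the goal is to show that $X$ is an Inoue surface, i.e. a quotient of $\mathbb{H} \times \mathbb{C}$ (with $\mathbb{H}$ the upper half-plane) by a discrete group. Following Inoue, the strategy is to produce a holomorphic vector field (equivalently, a holomorphic $1$-form) twisted by a suitable flat line bundle $L$; such an $L$ lives in $\mathrm{Pic}^0(X) \cong \mathbb{C}^*$, which is nontrivial precisely because $b_1(X) = 1$. Since $c_2(X) = \chi(X) = 0$, and the absence of curves rules out a divisorial zero locus, such a twisted field is nowhere vanishing and defines a nonsingular holomorphic foliation. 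Lifting to the universal cover $\widetilde{X}$ and studying the leaf space together with the induced $\pi_1(X)$-action, one identifies $\widetilde{X}$ with $\mathbb{H} \times \mathbb{C}$ and $\pi_1(X)$ with one of Inoue's solvable groups, so that $X$ is biholomorphic to $S_M$, $S^{(+)}$, or $S^{(-)}$.

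\textbf{Case 2: $X$ contains a compact curve $C$.} From $C \cdot C = 0$ and $K_X \cdot C = 0$, adjunction gives $p_a(C) = 1 + \tfrac{1}{2}(C \cdot C + K_X \cdot C) = 1$, so every compact curve has arithmetic genus one. Using such a curve together with the triviality of $c_1(X)$, one constructs a holomorphic map realizing $\widetilde{X}$ as $\mathbb{C}^2 \setminus \{0\}$ with $\pi_1(X)$ acting through a contraction, exhibiting $X$ as a Hopf surface.

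The main obstacle is Case 1, and specifically the \emph{existence} of the twisted holomorphic vector field. Inoue originally had to assume the existence of the line bundle $L$; removing this hypothesis is the deep content of Bogomolov's theorem \cite{Bogomolov1, Bogomolov2}. I expect this step to require one of the known routes: Bogomolov's analysis of the numerically trivial (anti)canonical bundle combined with vanishing theorems, the flat-connection argument of Li, Yau, and Zheng \cite{LYZ}, or Teleman's gauge-theoretic study of an instanton moduli space \cite{Teleman0}. Once the field is in hand, verifying that the foliation is nonsingular and that the monodromy is exactly an Inoue group, and separately completing the Hopf identification in Case 2, are comparatively routine.
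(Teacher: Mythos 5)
The paper does not actually prove this statement. Theorem \ref{b_2=0} is quoted as background and attributed to Bogomolov \cite{Bogomolov1}, \cite{Bogomolov2}, with later proofs by Teleman \cite{Teleman0} and Li--Yau--Zheng \cite{LYZ}; no argument is given in the text. So there is nothing in the paper to compare your write-up against line by line. Judged on its own terms, your outline is a faithful reconstruction of how the theorem is proved in that literature: the numerical reductions ($\chi = 0$, $\tau = 0$, vanishing intersection form, every curve of arithmetic genus one) are correct; the curve/no-curve dichotomy is the standard one; the curve case is Kodaira's theorem that a class $\mathrm{VII}_0$ surface with $b_2 = 0$ containing a curve is a Hopf surface (this deserves an explicit citation to \cite{Kodaira} --- your sentence ``one constructs a holomorphic map realizing $\widetilde{X}$ as $\mathbb{C}^2\setminus\{0\}$'' compresses a genuinely nontrivial classical argument, not a routine verification); and the curve-free case is Inoue's theorem \cite{Inoue} applied once the twisted holomorphic $1$-form is in hand.

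The genuine gap, which you yourself flag, is that the existence of the flat line bundle $L$ with $H^0(X, \Omega^1_X \otimes L) \neq 0$ in the curve-free case is not a step your argument supplies: it is the entire content of Bogomolov's theorem, and your proposal defers it to exactly the references the paper cites for the full statement. Consequently the proposal is a correct roadmap of the known proof with its hard kernel outsourced, not an independent proof; everything that is not deep in your outline is correct, and everything that is deep is a citation. That is a perfectly reasonable way to treat this result --- it is how the paper itself treats it --- but it should be presented as such, rather than as a proof with one ``expected'' step. Two small accuracy points: $c_1(X)$ is not trivial, only torsion (equivalently numerically trivial, since $H^2(X;\mathbb{R}) = 0$), and your nonvanishing argument for the twisted field implicitly uses flatness of $L$ to get $c_2(\Omega^1_X \otimes L) = c_2(X) = 0$; both are easily repaired but should be stated precisely.
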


In the following sections, we review the constructions of Hopf and Inoue surfaces, before moving on to class $\mathrm{VII}_0$ surfaces with $b_2 > 0$.

\subsubsection{Hopf Surfaces}

A \textit{Hopf manifold} is a compact complex manifold whose universal cover is biholomorphic to $\mathbb{C}^n\setminus\{0\}$. A Hopf manifold with fundamental group isomorphic to $\mathbb{Z}$ is called a \textit{primary Hopf manifold}, otherwise it is called a \textit{secondary Hopf manifold}. Primary Hopf manifolds are sometimes defined to be quotients of $\mathbb{C}^n\setminus\{0\}$ by the infinite cyclic group generated by a contraction; Kodaira showed that these two definitions coincide. Moreover, he showed that every secondary Hopf manifold is finitely covered by a primary Hopf manifold, see \cite[Theorem 30]{Kodaira}\footnote{Kodaira only proves these statements in the case of surfaces, but the proofs apply for complex manifolds of arbitrary dimension.}. 

The structure of Hopf surfaces is well understood -- this is in part due to a normal form for contractions on $\mathbb{C}^2$. Up to an automorphism, every contraction on $\mathbb{C}^2$ is of the form $(z_1, z_2) \mapsto (\alpha_1z_1 + \lambda z_2^m, \alpha_2z_2)$ where $m$ is a positive integer and $\alpha_1, \alpha_2, \lambda \in \mathbb{C}$ are subject to the conditions $(\alpha_1 - \alpha_2^m)\lambda = 0$ and $0 < |\alpha_1| \leq |\alpha_2| < 1$; see \cite{Sternberg} and \cite{Lattes}. It follows that all primary Hopf surfaces are deformation equivalent, and hence diffeomorphic; namely, there are all diffeomorphic to $S^1\times S^3$. The diffeomorphism types of secondary Hopf surfaces were determined by Kato, see \cite[Theorem 9]{Kato1} and \cite{Kato2}:

\begin{theorem}
\label{SecHopf}
Every secondary Hopf surface is diffeomorphic to
\begin{enumerate}
\item[(1)] a product of $S^1$ and a quotient of $S^3$, or 
\item[(2)] the mapping torus of a quotient of $S^3$ by a diffeomorphism of order two or three.
\end{enumerate}
\end{theorem}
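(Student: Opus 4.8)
The plan is to reduce the statement to the study of a finite free action on a primary Hopf surface, and then to exhibit the quotient as a mapping torus over $S^1$ using the fact that $b_1 = 1$. Write $X = (\mathbb{C}^2 \setminus \{0\})/G$, where $G = \pi_1(X)$ acts freely and properly discontinuously by biholomorphisms. By Hartogs' theorem every such automorphism extends across the origin, so $G$ may be regarded as a group of biholomorphisms of $\mathbb{C}^2$ fixing $0$. By \cite{Kodaira}, $X$ is finitely covered by a primary Hopf surface, so $G$ contains an infinite cyclic subgroup of finite index generated by a contraction; after passing to its normal core (still infinite cyclic, of finite index, and generated by a contraction $f$) I may assume $\langle f\rangle$ is normal in $G$. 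Since every conjugate of $f$ is again a contraction fixing $0$ and generates $\langle f\rangle$, it must equal $f$; hence $\langle f\rangle$ is central and $\Gamma := G/\langle f\rangle$ is finite.

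Next I would fibre $X$ over $S^1$. As a Hopf surface is a class $\mathrm{VII}$ surface, $b_1(X) = 1$, so $\mathrm{Hom}(G,\mathbb{R}) \cong \mathbb{R}$; let $\tau$ be a generator. If $\tau(f)$ vanished then $\tau$ would factor through the finite group $\Gamma$ and be trivial, so $\tau(f) \neq 0$ and I may normalise $\tau(f) = 1$. The image $\tau(G)$ then contains $\mathbb{Z}$ with finite quotient, whence $\tau(G) = \tfrac{1}{d}\mathbb{Z}$ for some $d \geq 1$, and $\Gamma_0 := \ker \tau$ is a finite normal subgroup of $G$ with $G/\Gamma_0 \cong \mathbb{Z}$. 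I would then construct a $\tau$-equivariant smooth submersion $h \colon \mathbb{C}^2 \setminus \{0\} \to \mathbb{R}$, that is, one with $h(gz) = h(z) + \tau(g)$, by averaging the function $\log|z|$ over the action (the relevant obstruction is precisely the class $\tau$ already in hand). Using $\mathbb{C}^2 \setminus \{0\} \cong S^3 \times \mathbb{R}$, the map $h$ is a proper submersion with $3$-sphere fibres, and it descends to a proper submersion $X \to \mathbb{R}/\tau(G) \cong S^1$, hence a fibre bundle by Ehresmann's theorem.

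This exhibits $X$ as the mapping torus of the monodromy $\phi$ acting on the fibre $F = h^{-1}(0)/\Gamma_0 = S^3/\Gamma_0$; linearising the finite action of $\Gamma_0$ (Cartan) shows $F$ is in fact a spherical space form, so it is a quotient of $S^3$ as required. The monodromy $\phi$ is induced by an element $t \in G$ with $\tau(t) = \tfrac{1}{d}$, and since $t^{d} \in f\,\Gamma_0$ with $f$ central and acting on each fibre through the connected group $U(2)$ (hence isotopically trivially), $\phi^{d}$ is isotopic to the identity. Thus $[\phi]$ has finite order dividing $d$ in the mapping class group of $F$, and I may take $\phi$ itself to have finite order. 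If that order is $1$ then $X \cong S^1 \times (S^3/\Gamma_0)$, giving case (1); otherwise $X$ is a nontrivial mapping torus, giving case (2).

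The main obstacle is to show that this order is $1$, $2$, or $3$; topology alone does not suffice, since a priori a mapping torus of a space form can have monodromy of arbitrary order. The bound reflects the holomorphicity of the action, as $[\phi]$ is realised by conjugation of the linear part of $t$ on the fixed-point-free subgroup $\Gamma_0 \subset U(2)$, and one must determine which finite-order outer symmetries are compatible with the existence of a central contraction acting freely. I would resolve this by appealing to Kodaira's explicit determination of the groups $G$ occurring for Hopf surfaces \cite{Kodaira}: the order-$2$ and order-$3$ twists correspond exactly to the exceptional families in that list, and checking that each yields the stated mapping torus, and that no higher order arises, is then a finite if delicate verification.
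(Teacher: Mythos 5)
First, a point of comparison: the paper does not prove Theorem \ref{SecHopf} at all --- it is quoted from Kato (\cite[Theorem 9]{Kato1} together with the erratum \cite{Kato2}), so the relevant question is whether your argument could stand as an independent proof. It cannot, because the step that constitutes the actual content of Kato's theorem is missing. Your reduction --- extending $G$ over the origin by Hartogs, centralising the contraction $f$ via the normal core (the conjugate of a contraction is a contraction, so $gfg^{-1}=f$ rather than $f^{-1}$), extracting the finite kernel $\Gamma_0$ of $\tau$, and exhibiting $X$ as a mapping torus of a finite-order diffeomorphism of $S^3/\Gamma_0$ --- is a sensible skeleton, but at the end you concede that ``topology alone does not suffice'' and defer the bound on the monodromy order to ``a finite if delicate verification'' against Kodaira's list of groups. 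That verification \emph{is} the theorem: everything before it only shows that a secondary Hopf surface is a mapping torus of some finite-order diffeomorphism of a spherical space form, which does not distinguish orders $2$ and $3$ from any other order, nor identify which space forms and which twists actually occur. A proof that stops where yours stops has established a strictly weaker statement.

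There are also several asserted steps that are genuinely nontrivial and would fail as written. The averaging of $\log|z|$ does not produce a $\tau$-equivariant function, since $f$ is a general contraction (e.g.\ $(z_1,z_2)\mapsto(\alpha_1 z_1+\lambda z_2^m,\alpha_2 z_2)$) and not a homothety, so $\log|fz|-\log|z|$ is not constant; one needs a different construction, and even then submersivity and properness must be checked rather than declared. The claim that the fibres of $h$ are $3$-spheres requires either the Poincar\'e conjecture (a fibre of a proper submersion of $\mathbb{C}^2\setminus\{0\}$ over $\mathbb{R}$ is a priori only a homotopy $3$-sphere) or a construction careful enough to produce standard spheres. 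Finally, promoting the statement ``$[\phi]^d$ is trivial in the mapping class group'' to ``$\phi$ may be taken to be a finite-order diffeomorphism'' is a Nielsen-realisation statement for elliptic $3$-manifolds; it is true, but it rests on deep results (the Smale conjecture and its generalisations to spherical space forms), not on the soft isotopy argument you indicate. Each of these can in principle be repaired, but together with the missing order bound they mean the proposal is an outline of a strategy rather than a proof, whereas the paper's own treatment is an appeal to Kato's classification, where precisely these points are carried out.
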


Of course, one could view the first family as mapping tori of a diffeomorphism of order one, namely the identity map. In particular, a secondary Hopf surface in the second family is double or triple covered by a secondary Hopf surface in the first family.

\subsubsection{Inoue Surfaces}

Inoue surfaces were introduced by Inoue in \cite{Inoue}, and further explained in \cite{Inouepm}. We outline the construction of the four families of Inoue surfaces $S^+_M$, $S^-_M$, $S^+_{N, p, q, r, t}$, and $S^-_{N, p, q, r}$, all of which have universal cover biholomorphic to $\mathbb{H}\times\mathbb{C}$. As Inoue surfaces are the main focus of this paper, we take this opportunity to give a relatively detailed description of the underlying smooth topology.

Let $M = [m_{ij}] \in SL(3, \mathbb{Z})$ be a matrix with one real eigenvalue $\alpha > 0$ and two complex eigenvalues $\beta$, $\overline{\beta}$ where $\operatorname{Im}(\beta) > 0$. Let $(a_1, a_2, a_3)$, $(b_1, b_2, b_3)$, and $(\overline{b_1}, \overline{b_2}, \overline{b_3})$ be eigenvectors of $M$ corresponding to the eigenvalues $\alpha$, $\beta$, and $\overline{\beta}$ respectively. By replacing $(a_1, a_2, a_3)$ with its real part if necessary, we can assume the entries are real.

Consider the following biholomorphisms of $\mathbb{H}\times\mathbb{C}$:
\begin{align*}
g_0(w, z) &= (\alpha w, \beta z)\\
g_i(w, z) &= (w + a_i, z + b_i),\quad i = 1, 2, 3.
\end{align*}
The group generated by $g_0, g_1, g_2, g_3$, which we denote by $G_M^+$, acts freely, properly discontinuously, and cocompactly on $\mathbb{H}\times\mathbb{C}$. The resulting quotient $S_M^+ := (\mathbb{H}\times\mathbb{C})/G_M^+$ is a compact complex surface. In order to better understand the topology of $S_M^+$, it will be useful to consider the subgroup of $G_M^+$ generated by $g_1, g_2, g_3$, which we denote by $\Gamma_M^+$; as $g_1, g_2, g_3$ commute, we see that $\Gamma_M^+ \cong \mathbb{Z}^3$.

Note that the action of $\Gamma_M^+$ on $(w, z)$ preserves $\operatorname{Im}(w)$ and the quotient of $\{(w, z) \in \mathbb{H}\times\mathbb{C} \mid \operatorname{Im}(w) = d\}$ by $\Gamma_M^+$ can be identified with $(\mathbb{R}\times\mathbb{C})/\Gamma_M^+$ by forgetting $\operatorname{Im}(w)$. The vectors $(a_1, b_1), (a_2, b_2), (a_3, b_3) \in \mathbb{R}\times\mathbb{C}$ are linearly independent over $\mathbb{R}$, and hence generate a lattice. Therefore $(\mathbb{R}\times\mathbb{C})/\Gamma_M^+$ is diffeomorphic to a three-torus $T^3$and $(\mathbb{H}\times\mathbb{C})/\Gamma_M^+$ is diffeomorphic to $(0, \infty)\times T^3$.

As $(a_1, a_2, a_3)$ and $(b_1, b_2, b_3)$ are eigenvectors of $M$ for the eigenvalues $\alpha$ and $\beta$ respectively, a simple calculation shows that $g_0g_ig_0^{-1} = g_1^{m_{i1}}g_2^{m_{i2}}g_3^{m_{i3}}$ for $i = 1, 2, 3$. So $g_0$ is in the normaliser of $\Gamma_M^+$ and therefore descends to a diffeomorphism $(\mathbb{H}\times\mathbb{C})/\Gamma_M^+ \to (\mathbb{H}\times\mathbb{C})/\Gamma_M^+$. Under the identification with $(0, \infty)\times T^3$, the map induced by $g_0$ restricts to a diffeomorphism between $\{1\}\times T^3$ and $\{\alpha\}\times T^3$. So $S_M^+ = (\mathbb{H}\times\mathbb{C})/G_M^+ = ((\mathbb{H}\times\mathbb{C})/\Gamma_M^+)/\langle g_0\rangle$ is diffeomorphic to a mapping torus of a self-diffeomorphism of $T^3$. By tracing through the identifications, it is not hard to show that the self-diffeomorphism is just the map on $T^3$ induced by $M^T$, see \cite[Proposition 2.9]{EP} for example.

The Inoue surface $S_M^-$ is defined similarly to $S_M^+$. Consider the following biholomorphisms of $\mathbb{H}\times\mathbb{C}$:
\begin{align*}
g_0(w, z) &= (\alpha w, \overline{\beta} z)\\
g_i(w, z) &= (w + a_i, z + \overline{b_i}),\quad i = 1, 2, 3.
\end{align*}
Let $G^-_M$ be the group generated by $g_0, g_1, g_2, g_3$ and define $S^-_M := (\mathbb{H}\times\mathbb{C})/G^-_M$. The arguments above concerning $S^+_M$ can be made analagously for $S^-_M$. More directly, note that the map $\mathbb{H}\times\mathbb{C} \to \mathbb{H}\times\mathbb{C}$ given by $(w, z) \mapsto (w, \overline{z})$ descends to a diffeomorphism $S^+_M \to S^-_M$.

\begin{remark}
There is some confusion in the literature regarding the Inoue surfaces $S^+_M$ and $S^-_M$. For a matrix $M \in SL(3, \mathbb{Z})$ with one real eigenvalue $\alpha > 1$ and two complex conjugate eigenvalues $\beta$, $\overline{\beta}$, Inoue defined in \cite{Inoue} a complex surface $S_M$ in the same way as $S^+_M$ was defined above. However, Inoue did not indicate how to distinguish between $\beta$ and $\overline{\beta}$; note that we required $\operatorname{Im}(\beta) > 0$. So, depending on the naming of the eigenvalues, the surface $S_M$ could be $S^+_M$ or $S^-_M$. In \cite{Inouepm}, Inoue separated the two cases and showed that they are not biholomorphic, or even deformation equivalent. In the literature which followed, the distinction between $S^+_M$ and $S^-_M$ was not always observed, furthermore, the notation $S^0$ is sometimes used to indicate the Inoue surfaces which Inoue denoted by $S_M$.
\end{remark}

Now let $N = [n_{ij}] \in SL(2, \mathbb{Z})$ be a matrix with two real eigenvalues $\alpha$ and $\frac{1}{\alpha}$ where $\alpha > 1$; this is equivalent to $\operatorname{tr}(N) > 2$. Let $(a_1, a_2)$ and $(b_1, b_2)$ be real eigenvectors of $N$ for $\alpha$ and $\frac{1}{\alpha}$ respectively. Let $p, q, r \in \mathbb{Z}$ with $r \neq 0$ and $t \in \mathbb{C}$. Consider the following equation for $c_1$ and $c_2$: 
\begin{equation}
\label{c_i+}
(I - N)\begin{bmatrix}c_1\\ c_2\end{bmatrix} = \begin{bmatrix}e_1\\ e_2\end{bmatrix} + \frac{1}{r}(a_2b_1 - a_1b_2)\begin{bmatrix}p\\ q\end{bmatrix}
\end{equation}
where $e_i = \frac{1}{2}n_{i1}(n_{i1} - 1)a_1b_1 + \frac{1}{2}n_{i2}(n_{i2} - 1)a_2b_2 + n_{i1}n_{i2}a_2b_1$; the reason for the seemingly peculiar form of equation (\ref{c_i+}) will become clear later. As $1$ is not an eigenvalue of $N$, the matrix $I - N$ is invertible, so there is a unique solution to the above equation. Moreover, $c_1$ and $c_2$ are real.

Consider the following biholomorphisms of $\mathbb{H}\times\mathbb{C}$:
\begin{align*}
g_0(w, z) &= (\alpha w, z + t)\\
g_i(w, z) &= (w + a_i, z + b_iw + c_i),\quad i = 1, 2\\
g_3(w, z) &= \left(w, z + \frac{1}{r}(a_2b_1 - a_1b_2)\right).
\end{align*}
The group generated by $g_0, g_1, g_2, g_3$, which we denote by $G^+_{N,p,q,r,t}$, acts freely, properly discontinuously, and cocompactly on $\mathbb{H}\times\mathbb{C}$. The resulting quotient $S_{N,p,q,r,t}^+ := (\mathbb{H}\times\mathbb{C})/G_{N,p,q,r,t}^+$ is a compact complex surface. In order to better understand the topology of $S_{N,p,q,r,t}^+$, it will be useful to consider the subgroup of $G_{N,p,q,r,t}^+$ generated by $g_1, g_2, g_3$, which we denote by $\Gamma_N^+$ for brevity.

Note that $\Gamma_N^+$ acting on $(w, z)$ preserves $\operatorname{Im}(w)$ and the quotient of $\{(w, z) \in \mathbb{H}\times\mathbb{C} \mid \operatorname{Im}(w) = d\}$ by $\Gamma^+_N$ can be identified with $(\mathbb{R}\times\mathbb{C})/\Gamma_N^+$ by forgetting $\operatorname{Im}(w)$. There is a diffeomorphism from $\mathbb{R}\times\mathbb{C}$ to $H(3, \mathbb{R})$, the three-dimensional real Heisenberg group, given by 
$$(\operatorname{Re}(w), z) \mapsto \begin{bmatrix} 1 & \zeta & \gamma\\ 0 & 1 & \eta\\ 0 & 0 & 1\end{bmatrix}$$
where $(\zeta, \eta)$ are the coordinates of $(\operatorname{Re}(w), \operatorname{Im}(z)/\operatorname{Im}(w))$ in the basis $\{(a_1, b_1), (a_2, b_2)\}$, and $\gamma = \operatorname{Re}(z)/(a_2b_1 - a_1b_2)$. Under this diffeomorphism, the action of $g_1$, $g_2$, and $g_3$ correspond to left multiplication by the matrices
$$\begin{bmatrix} 1 & 1 & c_1'\\ 0 & 1 & 0\\ 0 & 0 & 1\end{bmatrix},\quad \begin{bmatrix} 1 & 0 & c_2'\\ 0 & 1 & 1\\ 0 & 0 & 1\end{bmatrix},\quad\ \text{and}\ \quad \begin{bmatrix} 1 & 0 & \frac{1}{r}\\ 0 & 1 & 0\\ 0 & 0 & 1\end{bmatrix}$$
which generate a lattice $\Lambda$ in $H(3, \mathbb{R})$; here $c_i' = c_i/(a_2b_1 - a_1b_2)$. In particular, the quotient $(\mathbb{R}\times\mathbb{C})/\Gamma_N^+$ is diffeomorphic to a nilmanifold. Mapping an element of $H(3, \mathbb{R})/\Lambda$ to $(\zeta, \eta) + \mathbb{Z}^2$ defines a submersion to $T^2 = \mathbb{R}^2/\mathbb{Z}^2$ which endows $H(3, \mathbb{R})/\Lambda$ with the structure of a circle bundle over $T^2$. 

As the commutators of the matrices above do not depend on $c_1'$ and $c_2'$, the lattice $\Lambda$ is isomorphic to $\Lambda_r$, the lattice generated by the above matrices with zeroes in place of $c_1'$ and $c_2'$. As $H(3, \mathbb{R})/\Lambda$ and $H(3, \mathbb{R})/\Lambda_r$ are compact nilmanifolds with isomorphic fundamental groups, they are diffeomorphic \cite[page 235]{Auslander}. The latter is a quotient of the Heisenberg manifold $H(3, \mathbb{R})/H(3, \mathbb{Z})$ by $\mathbb{Z}_{r} = \mathbb{Z}/r\mathbb{Z}$ where $\mathbb{Z}_r$ acts on the $\gamma$ entry by translation. The Heisenberg manifold is the total space of the Chern class 1 circle-bundle over $T^2$, with the same projection map as described above. The $\mathbb{Z}_{r}$ action preserves the fibers, so it follows that $H(3, \mathbb{R})/\Lambda_r$ is the total space of the Chern class $r$ circle-bundle over $T^2$.

From the above, we see that $(\mathbb{H}\times\mathbb{C})/\Gamma_N^+$ is diffeomorphic to $(0, \infty)\times F$ where $F$ is the total space of the Chern class $r$ circle-bundle over $T^2$. By direct computation, one can show that $g_0g_1g_0^{-1} = g_1^{n_{11}}g_2^{n_{12}}g_3^p$ and $g_0g_2g_0^{-1} = g_1^{n_{21}}g_2^{n_{22}}g_3^q$. In fact, these two relations are equivalent to the constants $c_1, c_2$ satisfying equation (\ref{c_i+}), which explains its peculiar form. In addition, the biholomorphism $g_0$ commutes with $g_3$, so $g_0$ is in the normaliser of $\Gamma_N^+$ and therefore descends to a diffeomorphism $(\mathbb{H}\times\mathbb{C})/\Gamma_N^+ \to (\mathbb{H}\times\mathbb{C})/\Gamma_N^+$. Under the identification with $(0, \infty)\times F$, the map induced by $g_0$ restricts to a diffeomorphism between $\{1\}\times F$ and $\{\alpha\}\times F$. So $S_{N,p,q,r,t}^+ = (\mathbb{H}\times\mathbb{C})/G_{N,p,q,r,t}^+ = ((\mathbb{H}\times\mathbb{C})/\Gamma_N^+)/\langle g_0\rangle$ is diffeomorphic to a mapping torus of a self-diffeomorphism of $F$.

Finally, let $N = [n_{ij}] \in GL(2, \mathbb{Z})$ be a matrix with determinant $-1$ and real eigenvalues $\alpha$ and $-\frac{1}{\alpha}$ such that $\alpha > 1$. Let $(a_1, a_2)$ and $(b_1, b_2)$ be real eigenvectors of $N$ for $\alpha$ and $-\frac{1}{\alpha}$ respectively. Let $p, q, r \in \mathbb{Z}$ with $r \neq 0$. Consider the following equation for $c_1$ and $c_2$:
\begin{equation}
\label{c_i-}
(-I-N)\begin{bmatrix} c_1\\ c_2\end{bmatrix} = \begin{bmatrix} e_1\\ e_2\end{bmatrix} + \frac{1}{r}(a_2b_1 - a_1b_2)\begin{bmatrix} p\\ q\end{bmatrix}
\end{equation}
where $e_i = \frac{1}{2}n_{i1}(n_{i1} - 1)a_1b_1 + \frac{1}{2}n_{i2}(n_{i2} - 1)a_2b_2 + n_{i1}n_{i2}a_2b_1$ as before. As $-1$ is not an eigenvalue of $N$, the matrix $-I - N$ is invertible, so there is a unique solution to the above equation. Moreover, $c_1$ and $c_2$ are real.

Consider	the following biholomorphisms of $\mathbb{H}\times\mathbb{C}$:
\begin{align*}
g_0(w, z) &= (\alpha w, -z)\\
g_i(w, z) &= (w + a_i, z + b_iw + c_i),\quad i = 1, 2\\
g_3(w, z) &= \left(w, z + \frac{1}{r}(a_2b_1 - a_1b_2)\right).
\end{align*}
The group generated by $g_0, g_1, g_2, g_3$, which we denote by $G_{N, p, q, r}^-$, acts freely, properly discontinuously, and cocompactly on $\mathbb{H}\times\mathbb{C}$. The resulting quotient $S^-_{N,p,q,r} := (\mathbb{H}\times\mathbb{C})/G_{N,p,q,r}^-$ is a compact complex surface. Letting $\Gamma_N^-$ denote the subgroup generated by $g_1, g_2, g_3$, then, as before, the quotient $(\mathbb{H}\times\mathbb{C})/\Gamma_N^-$ is diffeomorphic to $(0, \infty)\times F$ where $F$ denotes the total space of the Chern class $r$ circle-bundle over $T^2$. As in the previous case, we have $g_0g_1g_0^{-1} = g_1^{n_{11}}g_2^{n_{12}}g_3^p$ and $g_0g_2g_0^{-1} = g_1^{n_{21}}g_2^{n_{22}}g_3^q$, which is equivalent to $c_1, c_2$ satisfying (\ref{c_i-}), but now $g_0g_3g_0^{-1} = g_3^{-1}$. Still, the biholomorphism $g_0$ is in the normaliser of $\Gamma_N^-$ and therefore descends to a diffeomorphism $(\mathbb{H}\times\mathbb{C})/\Gamma_N^- \to (\mathbb{H}\times\mathbb{C})/\Gamma_N^-$. Under the identification with $(0, \infty)\times F$, the map induced by $g_0$ restricts to a diffeomorphism between $\{1\}\times F$ and $\{\alpha\}\times F$. So $S_{N,p,q,r}^- = (\mathbb{H}\times\mathbb{C})/G_{N,p,q,r}^- = ((\mathbb{H}\times\mathbb{C})/\Gamma_N^-)/\langle g_0\rangle$ is diffeomorphic to a mapping torus of a self-diffeomorphism of $F$. 

Note that $N^2 \in SL(2, \mathbb{Z})$ and has real eigenvalues $\alpha^2 > 1$ and $\frac{1}{\alpha^2}$ with real eigenvectors $(a_1, a_2)$ and $(b_1, b_2)$. So if one considers the group of biholomorphisms of $\mathbb{H}\times\mathbb{C}$ generated by $g_0^2, g_1, g_2, g_3$, they appear to be the same as the biholomorphisms which would generate $G_{N^2, p_1, q_1, r, 0}^+$ for some $p_1, q_1 \in \mathbb{Z}$. However, for that to be the case, one must check that the $c_1, c_2$ given by equation (\ref{c_i-}) satisfy equation (\ref{c_i+}) for the matrix $N^2$ and integers $p_1$, $q_1$. This can be achieved by expanding $(g_0^2)g_1(g_0^2)^{-1}$ and $(g_0)^2g_2(g_0^2)^{-1}$ carefully, which will dictate the values of $p_1, q_1$. Therefore, the surface $S_{N^2, p_1, q_1, r, 0}^+$ double covers $S_{N, p, q, r}^-$. If particular, if $S_{N, p, q, r}^-$ is diffeomorphic to the mapping torus of $f\colon F \to F$, then $S_{N^2,p_1, q_1,r,0}^+$ is diffeomorphic to the mapping torus of $f\circ f$.


We now record the following fact which follows from the constructions above.

\begin{proposition}
\label{Inouemt}
Every Inoue surface is diffeomorphic to a mapping torus of a compact three-dimensional nilmanifold $F$.
\end{proposition}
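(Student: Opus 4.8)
The plan is to proceed case by case through the four families of Inoue surfaces, invoking in each case the mapping torus decomposition already obtained in the construction above, and then checking that the relevant fiber is a compact three-dimensional nilmanifold. Since the construction exhibits each of $S_M^+$, $S_M^-$, $S_{N,p,q,r,t}^+$, and $S_{N,p,q,r}^-$ as a mapping torus of a self-diffeomorphism of an explicit three-manifold, essentially all of the work is already done; what remains is the observation that the two fibers that appear, namely $T^3$ and the circle bundle $F$, are both nilmanifolds.

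For the families $S_M^+$ and $S_M^-$, I would recall that the construction realises $S_M^+$ as the mapping torus of the self-diffeomorphism of $T^3$ induced by $M^T$, and that the map $(w,z) \mapsto (w, \overline{z})$ descends to a diffeomorphism $S_M^+ \to S_M^-$, so that $S_M^-$ inherits the same structure. As $T^3 = \mathbb{R}^3/\mathbb{Z}^3$ is the quotient of the abelian---hence nilpotent---Lie group $\mathbb{R}^3$ by the cocompact lattice $\mathbb{Z}^3$, it is a compact three-dimensional nilmanifold, which settles these two families.

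For the families $S_{N,p,q,r,t}^+$ and $S_{N,p,q,r}^-$, I would likewise recall that each is realised as the mapping torus of a self-diffeomorphism of $F$, the total space of the Chern class $r$ circle bundle over $T^2$. Here the construction has already identified $F$ with $H(3,\mathbb{R})/\Lambda$, the quotient of the three-dimensional real Heisenberg group by the cocompact lattice $\Lambda$; since $H(3,\mathbb{R})$ is nilpotent of step two, $F$ is a compact three-dimensional nilmanifold. As these four families exhaust all Inoue surfaces, the proposition follows. I do not anticipate any serious obstacle: the analytic and group-theoretic content lives entirely in the constructions preceding the statement, and the only remaining point is the essentially definitional fact that $T^3$ and the Heisenberg manifold $F$ are nilmanifolds.
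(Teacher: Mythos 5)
Your proposal is correct and follows essentially the same route as the paper: the paper states the proposition as an immediate consequence of the preceding constructions, which exhibit $S_M^{\pm}$ as mapping tori of $T^3$ and $S_{N,p,q,r,t}^+$, $S_{N,p,q,r}^-$ as mapping tori of the Chern class $r$ circle bundle $F \cong H(3,\mathbb{R})/\Lambda$ over $T^2$, exactly as you describe. Your only addition is making explicit the (correct, essentially definitional) observation that $T^3 = \mathbb{R}^3/\mathbb{Z}^3$ and $H(3,\mathbb{R})/\Lambda$ are compact three-dimensional nilmanifolds.
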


For those Inoue surfaces of the form $S_M^+$ or $S_M^-$, the fiber is $F = T^3$, while for those of the form $S_{N,p,q,r,t}^+$ or $S_{N,p,q,r}^-$, the fiber $F$ is the total space of the Chern class $r$ circle-bundle over $T^2$. Hasegawa has shown that all Inoue surfaces themselves are diffeomorphic to solvmanifolds, see \cite[Theorem 1]{Hasegawa}. In general, a compact solvmanifold $S$ is the total space of a fiber bundle with base a torus of dimension $b_1(S)$ and fiber a nilmanifold.

\subsubsection{Class $\mathrm{VII}_0$ surfaces with $b_2 > 0$} 

A \textit{spherical shell} in an $n$-dimensional complex manifold $X$ is an open subset $U$ which is diffeomorphic to a connected open subset of $S^{2n-1}$ in $\mathbb{C}^n\setminus\{0\}$. If $X\setminus U$ is connected, then $U$ is called a \textit{global spherical shell}. 

While every complex manifold contains a spherical shell, the existence of a global spherical shell is quite restrictive. Kato \cite[Theorem 1]{KatoGSS} showed that, for $n \geq 2$, if a connected complex manifold contains a global spherical shell, then it is a deformation of a modification of a primary Hopf manifold at finitely many points. In the case of surfaces, every modification is a composition of blowups of points \cite[Theorem 5.7]{Laufer}. Therefore, a connected compact complex surface $X$ which contains a global spherical shell, often called a \textit{Kato surface}, is a deformation of a blownup primary Hopf surface. Note that such a surface has $\kappa(X) = -\infty$ and $b_1(X) = 1$, and is therefore a class $\mathrm{VII}$ surface; the following conjecture asserts a partial converse is true.

\begin{gssconjecture}
A class $\mathrm{VII}_0$ surface with $b_2 > 0$ contains a global spherical shell, i.e. is a Kato surface.
\end{gssconjecture}

Note that the assumption $b_2 > 0$ is necessary as Inoue surfaces do not contain global spherical shells (they do not have fundamental group isomorphic to $\mathbb{Z}$). Similarly, the minimality condition is necessary, otherwise blownup Inoue surfaces would provide counterexamples.

Theorem \ref{b_2=0} states that the only class $\mathrm{VII}_0$ surfaces with $b_2 = 0$ are Hopf surface and Inoue surfaces. The global spherical shell conjecture asserts that class $\mathrm{VII}_0$ surfaces with $b_2 > 0$ are deformations of blownup primary Hopf surfaces. One might expect to be able to construct counterexamples by considering deformations of secondary Hopf surfaces or Inoue surfaces. This has been ruled out by work of Dloussky. In \cite{Dloussky1}, he shows that if a class $\mathrm{VII}_0$ surface with $b_2 > 0$ admits a global spherical shell, then any finite quotient also admits a global spherical shell; this rules out counterexamples which are deformations of blownup secondary Hopf surfaces. While counterexamples arising from deformations of blownup Inoue surfaces are prohibited by the arguments in the proof of \cite[Theorem 1.13]{Dloussky2}, namely case 5. 

By analysing the moduli space of stable holomorphic bundles on class $\mathrm{VII}_0$ surfaces, Teleman has verified the global spherical shell conjecture for small values of $b_2$, namely $b_2 = 1$ and $b_2 = 2$, see \cite{Teleman1} and \cite{Teleman2} respectively. In \cite{Teleman3}, Teleman has announced the case $b_2 = 3$ with the “long and technical” details to follow in an upcoming paper.

If the global spherical shell conjecture were indeed true, then every class $\mathrm{VII}_0$ surface with $b_2 > 0$ would be diffeomorphic to $(S^1\times S^3)\#b_2\overline{\mathbb{CP}^2}$.

\subsection{Kodaira Surfaces} A connnected compact complex surface $X$ is called a \textit{primary Kodaira surface} if $K_X$ is trivial and $b_1(X) = 3$. A \textit{secondary Kodaira surface} is a surface which admits a primary Kodaira surface as a covering.

We recall the following elementary fact:

\begin{proposition}
\label{Kodairasymp}
Primary Kodaira surfaces are symplectic.
\end{proposition}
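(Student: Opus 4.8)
The plan is to manufacture a symplectic form directly from the holomorphic volume form guaranteed by the triviality of the canonical bundle. Since $K_X$ is trivial by hypothesis, $X$ carries a nowhere-vanishing holomorphic $2$-form $\Omega$ (a trivialising section of $K_X$), and I would take this $\Omega$ as the starting point, showing that its real part $\omega := \operatorname{Re}(\Omega) = \tfrac{1}{2}(\Omega + \overline{\Omega})$ is a symplectic form on $X$.

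First I would verify that $\Omega$ is $d$-closed. Because $\Omega$ is holomorphic we have $\overline{\partial}\Omega = 0$, and because $\Omega$ has bidegree $(2,0)$ on a surface, $\partial\Omega$ is a $(3,0)$-form and hence vanishes identically (the holomorphic cotangent space has rank $2$); thus $d\Omega = \partial\Omega + \overline{\partial}\Omega = 0$. Taking real parts, $\omega$ is a closed real $2$-form, with no appeal to compactness or Hodge theory required.

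The key step is non-degeneracy. I would compute $\omega \wedge \omega = \tfrac14(\Omega + \overline{\Omega})\wedge(\Omega + \overline{\Omega})$. On a complex surface $\Omega\wedge\Omega$ is of type $(4,0)$ and $\overline{\Omega}\wedge\overline{\Omega}$ is of type $(0,4)$, so both vanish, leaving $\omega \wedge \omega = \tfrac12\,\Omega\wedge\overline{\Omega}$. Writing $\Omega = f\,dz_1\wedge dz_2$ locally with $f$ nowhere zero, one has $\Omega\wedge\overline{\Omega} = |f|^2\, dz_1\wedge dz_2\wedge d\overline{z}_1 \wedge d\overline{z}_2$, a nonzero multiple of the complex volume form at every point. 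Hence $\omega\wedge\omega$ is nowhere zero, $\omega$ is non-degenerate, and $(X,\omega)$ is symplectic.

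The only point requiring care—rather than a genuine obstacle—is checking that the real part, and not merely $\Omega\wedge\overline{\Omega}$ itself, is non-degenerate; this reduces to the vanishing of the pure $(4,0)$ and $(0,4)$ terms, which is special to complex dimension two. An alternative, more concrete route uses the description of a primary Kodaira surface as a quotient of $\mathbb{C}^2$ by a group of affine transformations of unit Jacobian determinant, so that $dz_1\wedge dz_2$ descends to $X$ and one may take $\Omega = dz_1\wedge dz_2$ explicitly; this recovers Thurston's well-known example of a symplectic non-K\"ahler manifold. Either way, the argument is elementary.
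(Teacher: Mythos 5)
Your proof is correct and is essentially identical to the paper's: both take the real part of the trivialising holomorphic $2$-form, verify closedness via $\overline{\partial}\Omega = 0$ and the bidegree vanishing of $\partial\Omega$, and establish non-degeneracy by computing $\omega\wedge\omega$ as a nonzero multiple of $\Omega\wedge\overline{\Omega} = |f|^2\,dz_1\wedge dz_2\wedge d\overline{z}_1\wedge d\overline{z}_2$ in local coordinates. The concluding remark about realising the surface as an affine quotient with $\Omega = dz_1\wedge dz_2$ is a pleasant extra, but the core argument matches the paper's.
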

\begin{proof}
Let $X$ be a primary Kodaira surface. As $K_X$ is holomorphically trivial, there is a nowhere-zero holomorphic two-form $\alpha$; let $\omega = 2\operatorname{Re}(\alpha) = \alpha + \overline{\alpha}$. As $\alpha$ is holomorphic, $\overline{\partial}\alpha = 0$ while $\partial\alpha = 0$ for bidegree reasons, so $\alpha$ is closed and hence so is $\omega$. In local holomorphic coordinates $(U, (z^1, z^2))$, we have $\alpha|_U = f dz^1\wedge dz^2$ where $f\colon U \to \mathbb{C}$ is a nowhere-zero holomorphic function. So $\omega^2|_U = 2\alpha\wedge\overline{\alpha} = |f|^2 dz^1\wedge dz^2\wedge d\overline{z}^1\wedge d\overline{z}^2$ which is nowhere-zero, hence $\omega$ is a non-degenerate closed two-form, i.e. a symplectic form.
\end{proof}

The proof only uses the fact that Kodaira surfaces are compact complex surfaces with trivial canonical bundle, so applies equally well to tori and $K3$ surfaces. In higher dimensions, triviality of the canonical bundle does not imply the existence of a symplectic form. For example, the connected sum of $m \geq 2$ copies of $S^3\times S^3$ admits a complex structure with trivial canonical bundle \cite{LT}, but has $b_2 = 0$. Also note that secondary Kodaira surfaces are not symplectic as they have $b_2 = 0$.

\section{Yamabe Invariant}

Let $M$ be a closed smooth $n$-dimensional manifold. The Einstein-Hilbert functional on the space of Riemannian metrics on $M$ is given by $g \mapsto \mathcal{E}(g)$ where
$$\mathcal{E}(g) = \frac{\displaystyle\int_Ms_gd\mu_g}{\operatorname{Vol}(M, g)^{\frac{n-2}{n}}}$$
is the normalised total scalar curvature; here $s_g$ denotes the scalar curvature of $g$ and $d\mu_g$ is the Riemannian volume density. The exponent in the denominator is such that $\mathcal{E}(cg) = \mathcal{E}(g)$ for any positive constant $c$, and hence $\mathcal{E}(g) = \mathcal{E}(g_1)$ where $g_1$ is the unique unit-volume metric homothetic to $g$. 

Hilbert first considered the unnormalised total scalar curvature functional and showed that when $n \geq 3$, the critical points are precisely Ricci-flat metrics, see \cite[Proposition 4.17]{Besse}. In the normalised case, the critical points, again for $n \geq 3$, are instead Einstein metrics (the vanishing of the Ricci tensor is replaced with the vanishing of the trace-free Ricci tensor) see \cite[Theorem 4.21]{Besse}. When $n = 2$, both the normalised and unnormalised total scalar curvatures are independent of the metric and is given by $4\pi\chi(M)$ due to the Gauss-Bonnet Theorem, hence the need for the restriction $n \geq 3$ in the previous statements.

When restricted to a conformal class $\mathcal{C}$, the critical points of the Einstein-Hilbert functional are constant scalar curvature metrics \cite[Proposition 4.25]{Besse}. By H\"older's inequality, the restricted functional is bounded below, so we define the \textit{Yamabe constant} of $\mathcal{C}$ to be the real number $Y(M, \mathcal{C}) = \inf_{g \in \mathcal{C}}Y(M, \mathcal{C})$. Yamabe \cite{Yamabe} claimed to prove that this infimum is always realised and hence every conformal class contains a constant scalar curvature metric. Trudinger \cite{Trudinger} pointed out a critical flaw in Yamabe's argument which was later rectified by works by Aubin \cite{Aubin} and Schoen \cite{Schoen}. It follows that there is a unit-volume metric in $\mathcal{C}$ with constant scalar curvature equal to $Y(M, \mathcal{C})$; if $Y(M, \mathcal{C}) \leq 0$, then this is the only unit-volume constant scalar curvature metric in $\mathcal{C}$, but this need not be the case if $Y(M, \mathcal{C}) > 0$ as the conformal class $[g_{\text{round}}]$ on $S^n$ demonstrates.

In his work on the Yamabe problem, Aubin showed that $Y(M, \mathcal{C}) \leq Y(S^n, [g_{\text{round}}])$. We define $Y(M) = \sup_{\mathcal{C}}Y(M, \mathcal{C})$ to be the \textit{Yamabe invariant} of $M$; note that $Y(M) \leq Y(S^n)$. We say the Yamabe invariant is \textit{realised} if there is a conformal class $\mathcal{C}$ for which $Y(M, \mathcal{C}) = Y(M)$, and hence there is a unit-volume metric with constant scalar curvature $Y(M)$. Note that $Y(M)$ is positive if and only if $M$ admits a metric of positive scalar curvature. If $Y(M)$ is non-positive, then $Y(M)$ can be characterised as the supremum of the values of the unit-volume constant scalar curvature metrics on $M$.

\subsection{Yamabe Invariants of Complex Surfaces}

We now review what is known about the values of the Yamabe invariant for complex surfaces. 

In the K\"ahler case, LeBrun \cite{LeBrun2} proved the following:

\begin{theorem}
\label{LeBrun}
Let $M$ be a a connected compact complex surface which admits a K\"ahler metric. Then $Y(M)$ is positive if $\kappa(M) = -\infty$, zero if $\kappa(M) = 0, 1$, and negative if $\kappa(M) = 2$.
\end{theorem}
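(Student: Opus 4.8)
The plan is to read off the sign of $Y(M)$ from the position of $M$ in the Kodaira--Enriques classification, handling each value of $\kappa(M)$ separately. Throughout I use the two facts recorded in Section 3: that $Y(M)>0$ exactly when $M$ admits a metric of positive scalar curvature, and that when $Y(M)\le 0$ it equals the supremum of the scalar curvatures of the unit-volume constant-scalar-curvature Yamabe minimizers, which in a fixed nonpositive conformal class is realised uniquely. The lower bounds on $Y(M)$ will come from explicit or limiting metric constructions, while the upper bounds (ruling out, or bounding the curvature away from, positive scalar curvature) will come from Seiberg--Witten theory.

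When $\kappa(M)=-\infty$ the surface is rational or ruled. The Fubini--Study metric on $\mathbb{CP}^2$, product/submersion metrics with round $\mathbb{CP}^1$ fibers on ruled surfaces, and the Gromov--Lawson surgery theorem (blowing up a point is connected sum with $\overline{\mathbb{CP}^2}$, which preserves positive scalar curvature) together show that every such $M$ carries a positive scalar curvature metric, so $Y(M)>0$. When $\kappa(M)=2$ the surface is of general type; here the canonical $\mathrm{spin}^c$ structure has nonzero Seiberg--Witten invariant, and LeBrun's refinement of Witten's curvature estimate evaluates $Y(M)=-4\pi\sqrt{2\,c_1^2(X_{\min})}$, where $X_{\min}$ is the minimal model, which is strictly negative.

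The substantive case is $\kappa(M)=0,1$, where I must establish both $Y(M)\le 0$ and $Y(M)\ge 0$. For the upper bound, in both Kodaira dimensions the canonical class furnishes a nonzero Seiberg--Witten invariant (when $b_+=1$, in the sense of the small-perturbation invariant), so Witten's vanishing theorem forbids positive scalar curvature and gives $Y(M)\le 0$. For the lower bound I argue $Y(M)\ge 0$. If $\kappa(M)=0$ the minimal model is a torus, hyperelliptic, $K3$, or Enriques surface, each carrying a Ricci-flat K\"ahler metric $g$ by Yau's theorem together with covering/quotient arguments; since such $g$ is scalar-flat we have $Y(M,[g])\le Y(M)\le 0$, and the uniqueness of constant-scalar-curvature representatives then forces $Y(M,[g])=0$. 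If $\kappa(M)=1$ the surface carries an elliptic fibration with flat $T^2$ fibers, and collapsing these fibers in an adiabatic limit produces a sequence of metrics whose scalar curvature tends to zero, again yielding $Y(M)\ge 0$. Non-minimal surfaces are handled by performing each blowup at a small scale and gluing in a scalar-flat (Burns-type) asymptotically locally Euclidean end modelled on the blowup of $\mathbb{C}^2$, so that the scalar curvature of the resulting metrics can be kept arbitrarily close to zero.

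I expect the main obstacle to be the two-sided estimate in the $\kappa(M)=0,1$ case. On the upper side the delicate point is the Seiberg--Witten argument when $b_+=1$, where the invariant depends on a chamber and wall-crossing must be controlled, together with the non-minimal case in which the exceptional classes must be folded into the monopole class. On the lower side the gluing and adiabatic-collapse constructions must be arranged so that the scalar curvature is genuinely controlled in the limit rather than merely bounded below, which is where the real analytic work lies.
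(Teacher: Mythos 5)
A point of order first: the paper never proves Theorem \ref{LeBrun}. It is quoted as background, with the proof attributed to LeBrun \cite{LeBrun2}; the paper's own contribution is the non-K\"ahler case. So your proposal can only be compared against LeBrun's original argument, and in outline it reconstructs that argument faithfully: positive scalar curvature metrics on rational and ruled surfaces (fiber-shrinking submersion metrics, plus Gromov--Lawson surgery to handle blowups) when $\kappa(M) = -\infty$; Seiberg--Witten monopole classes and the associated curvature estimates for the upper bounds when $\kappa(M) \geq 0$, including the chamber/wall-crossing subtlety at $b^+ = 1$ that you correctly flag; Ricci-flat K\"ahler metrics for minimal surfaces with $\kappa(M) = 0$; collapse of elliptic fibrations for $\kappa(M) = 1$; scalar-flat ALE (Burns-type) gluing for blowups; and the refined estimate $Y(M) = -4\pi\sqrt{2c_1^2(X_{\min})}$ in the general type case, where for the sign alone only the Seiberg--Witten upper bound is needed. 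Your use of uniqueness of constant-scalar-curvature representatives in nonpositive conformal classes to conclude $Y(M,[g]) = 0$ for a scalar-flat $g$ is also correct.

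One step is stated in a form that would fail as written. For $\kappa(M) = 1$ you claim the adiabatic collapse ``produces a sequence of metrics whose scalar curvature tends to zero,'' and for blowups that the glued metrics have scalar curvature ``arbitrarily close to zero.'' Properly elliptic surfaces in general have singular and multiple fibers, near which no collapsing sequence has pointwise small scalar curvature; likewise, in the ALE gluing the neck region carries scalar curvature of order one (it is only the volume of the neck that is small). What the argument actually needs, and what LeBrun's constructions deliver, is smallness in $L^{n/2} = L^2$: by H\"older's inequality applied to the Yamabe functional, every metric $g$ satisfies
\begin{equation*}
Y(M, [g]) \;\geq\; -\left(\int_M s_g^2 \, d\mu_g\right)^{1/2},
\end{equation*}
so a sequence of metrics with $\int_M s_{g_i}^2 \, d\mu_{g_i} \to 0$ already gives $Y(M) \geq 0$, with no pointwise control required. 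Reformulated this way, your lower bound goes through in all cases, including the blowups, and your outline then matches the structure of the cited proof.
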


While the value of the Yamabe invariant itself is not enough to distinguish between complex surfaces of Kodaira dimensions $0$ and $1$, LeBrun showed in the same paper that the two cases can be distinguished by asking when the Yamabe invariant is realised.

\begin{theorem}
\label{realised}
Let $M$ be a connected compact complex surface which admits a K\"ahler metric with $Y(M) = 0$, i.e. $\kappa(M) = 0$ or $1$. Then $Y(M)$ is realised if and only if $\kappa(M) = 0$ and $M$ is minimal.
\end{theorem}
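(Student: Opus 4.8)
The plan is to reduce the statement to a question about the existence of a scalar-flat metric, and then to detect the obstruction with Seiberg--Witten theory. Since $Y(M)=0$ is non-positive, a conformal class $\mathcal{C}$ satisfies $Y(M,\mathcal{C})=0$ precisely when its unique unit-volume constant-scalar-curvature representative is scalar-flat. Hence $Y(M)=\sup_{\mathcal C}Y(M,\mathcal C)=0$ is realised if and only if $M$ carries a metric of identically zero scalar curvature, and the rest of the proof decides exactly which of these K\"ahler surfaces admit one.

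For the ``if'' direction, suppose $\kappa(M)=0$ and $M$ is minimal, so that $M$ is a torus, $K3$, Enriques, or hyperelliptic surface, each with $c_1(M)=0$ in real cohomology. By Yau's solution of the Calabi conjecture (for tori and $K3$), together with passage to the free finite quotients (for the Enriques and hyperelliptic cases), $M$ admits a Ricci-flat K\"ahler metric. Such a metric is in particular scalar-flat, so $Y(M)=0$ is realised.

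For the ``only if'' direction, suppose $g$ is a scalar-flat metric on $M$. Because $\kappa(M)\geq 0$ and $M$ is K\"ahler, the canonical $\mathrm{spin}^c$ structure has non-zero Seiberg--Witten invariant, so $M$ possesses monopole classes; by the blowup formula these are $\pm\pi^{*}K_{M_{\min}}\pm E_1\pm\cdots\pm E_k$, where $E_1,\dots,E_k$ are the exceptional classes. Feeding $g$ into the Weitzenb\"ock formula for the Seiberg--Witten equations and using $s_g\equiv 0\geq 0$ forces the spinor to vanish, so the resulting solution is reducible with $F_A^{+}=0$; consequently every monopole class $\mathfrak c$ is represented by a $g$-anti-self-dual harmonic form, i.e.\ $\mathfrak c^{+}=0$. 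Taking differences of monopole classes then shows that each $E_i$ and $\pi^{*}K_{M_{\min}}$ is itself anti-self-dual. When $\kappa(M)=1$ this already yields a contradiction: $\pi^{*}K_{M_{\min}}$ is anti-self-dual with $(\pi^{*}K_{M_{\min}})^2=K_{M_{\min}}^2=0$, which forces $\pi^{*}K_{M_{\min}}=0$ and hence $K_{M_{\min}}=0$ in $H^2(M_{\min};\mathbb R)$, impossible for a surface of Kodaira dimension $1$.

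The remaining, and hardest, case is a nonminimal surface with $\kappa(M)=0$, where the anti-self-duality of the $E_i$ (each with $E_i^2=-1$) is not by itself contradictory. Here the key input is the rigidity in the equality case of LeBrun's Seiberg--Witten curvature estimate: a scalar-flat metric on a four-manifold carrying a monopole class must in fact be a scalar-flat K\"ahler metric for some compatible complex structure $J_g$. Granting this, I would invoke the classification of compact scalar-flat K\"ahler surfaces, according to which $(M,J_g)$ is either Ricci-flat (so $c_1(J_g)=0$ and $(M,J_g)$ is minimal with $\kappa=0$) or has $\kappa=-\infty$. The latter is excluded because a rational or ruled surface admits a metric of positive scalar curvature, forcing $Y(M)>0$ and contradicting $Y(M)=0$. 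The former is excluded by the topological identity $c_1^2=2\chi+3\sigma$: a Ricci-flat $J_g$ gives $2\chi(M)+3\sigma(M)=c_1(J_g)^2=0$, whereas a nontrivial blowup of a minimal $\kappa=0$ surface has $2\chi(M)+3\sigma(M)=-k<0$. Thus no scalar-flat metric exists, completing the only-if direction. The main obstacle is precisely this rigidity step -- upgrading an abstract scalar-flat metric to a scalar-flat K\"ahler one -- which is where the full force of the Seiberg--Witten equality analysis is required; one further subtlety is that some of these surfaces have $b_+=1$, so the argument must use the chamber-sensitive version of the Seiberg--Witten invariant.
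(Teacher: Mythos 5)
This theorem is not proved in the paper at all: it is LeBrun's result, quoted from \cite{LeBrun2}, so your proposal has to be judged against LeBrun's own argument. Much of what you write is sound and close in spirit to that argument. The reduction of realisation to the existence of a scalar-flat metric (via uniqueness of unit-volume constant-scalar-curvature metrics in conformal classes with $Y(M,\mathcal{C})\leq 0$) is correct; the ``if'' direction via Yau's theorem and finite quotients is correct; and your treatment of $\kappa(M)=1$ is a genuine Seiberg--Witten argument that works: scalar-flatness forces every monopole class to have vanishing self-dual part, an anti-self-dual class of square zero must vanish in $H^2(M;\mathbb{R})$, and the canonical class of a properly elliptic surface is not torsion. (The $b_+=1$ chamber issue you flag is real but is handled in the literature, e.g.\ by Gursky--LeBrun \cite{GuL}.)

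The genuine gap is the ``rigidity'' step you invoke for blowups of $\kappa=0$ surfaces. There is no theorem asserting that a scalar-flat metric on a four-manifold carrying a monopole class must be scalar-flat K\"ahler, and in the situation at hand the usual equality analysis cannot even get started: when $s_g\equiv 0$, the Weitzenb\"ock argument (which you yourself use) forces every solution of the unperturbed equations to be \emph{reducible}, $\Phi\equiv 0$, and it is precisely the nonvanishing spinor of an irreducible solution that produces the compatible almost-complex/K\"ahler structure in LeBrun's equality analyses -- which, moreover, concern the nondegenerate case $(c^+)^2>0$, not the case $c^+=0$ occurring here. So the key step of your hardest case is unsupported, and everything you derive from it (the classification of scalar-flat K\"ahler surfaces, the exclusion of the ruled and Ricci-flat alternatives) hangs in the air. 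The repair is both standard and simpler, and is the route LeBrun actually takes -- it is also exactly the pattern this paper uses for its own non-realisation statements about Inoue and Kodaira surfaces: since $Y(M)=0$, the manifold $M$ admits no metric of positive scalar curvature, so by Kazdan--Warner \cite[Lemma 5.2]{KW} any scalar-flat metric on $M$ is automatically \emph{Ricci}-flat, hence Einstein; but a $k$-fold blowup ($k\geq 1$) of a minimal surface with $\kappa=0$ or $1$ has
$$2\chi(M)+3\sigma(M)=c_1^2(M)=-k<0,$$
violating the Hitchin--Thorpe inequality. This disposes of all non-minimal cases at once (and, via the equality case of Hitchin--Thorpe, gives an alternative treatment of minimal $\kappa=1$ surfaces as well). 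With that substitution your outline becomes a complete proof; as written, it is not.
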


In addition, LeBrun computed the value of Yamabe invariant precisely in the general type case. If $M$ is a general type surface with minimal model $X$, then $Y(M) = Y(X) = -4\pi\sqrt{2c_1(X)^2}$. The Yamabe invariant in the positive case is harder to compute in general. The only known value with $\kappa(M) = -\infty$ is $Y(\mathbb{CP}^2) = 12\sqrt{2}\pi$, see \cite{LeBrun}. Also note that while the value of the Yamabe invariant is unchanged after blowing up a K\"ahler surface of Kodaira dimension 0, 1, or 2 (i.e. it is a bimeromorphic invariant in these cases) it is unknown whether this is true of K\"ahler surfaces of Kodaira dimension $-\infty$.

There is no analogue of Theorem \ref{LeBrun} in higher dimensions, which is why we have set our attention to complex dimension two. To see this, let $m \geq 3$, and consider a smooth degree $m + 3$ hypersurface $M$ of $\mathbb{CP}^{m+1}$; note that $M$ is of general type. As $M$ is a simply connected non-spin closed smooth manifold of real dimension $2m \geq 6$, it admits a metric of positive scalar curvature by \cite[Corollary C]{GL}, and hence $Y(M) > 0$ in contrast with the surface case. More generally, Petean \cite{Petean2} has shown that any simply connected manifold of real dimension at least five has non-negative Yamabe invariant.

In the non-K\"ahler setting, much less is known about the values of the Yamabe invariant. All of our knowledge is restricted to Hopf surfaces and their blowups. In \cite[Section 2]{Schoen2}, Schoen showed that $Y(S^1\times S^{n-1}) = Y(S^n)$ which can be explicitly calculated as the latter is realised by the round metric. In particular, primary Hopf surfaces are all diffeomorphic to $S^1\times S^3$ and we have $Y(S^1\times S^3) = Y(S^4) = 8\sqrt{6}\pi$. It follows from a result of Gursky and LeBrun \cite[Theorem B]{GuL} that the Yamabe invariant of a primary Hopf surface blownup at $k$ points, which is diffeomorphic to $M_k := (S^1\times S^3)\#k\overline{\mathbb{CP}^2}$, satisfies $12\sqrt{2}\pi \leq Y(M_k) \leq 4\pi\sqrt{2k + 16}$ for $k = 1, 2, 3$; it is also true for $k > 3$, but the upper bound is no better than the upper bound $Y(S^4)$. In particular, we see that $Y(M_k) \neq Y(S^1\times S^3)$ for $k =1, 2, 3$, so the Yamabe invariant is not a bimeromorphic invariant for class $\mathrm{VII}$ surfaces. The upper bound also applies to secondary Hopf surfaces blownup at $k > 0$ points. However, while those secondary Hopf surfaces in class (1) of Theorem \ref{SecHopf} and their blowups admit metrics of positive scalar curvature, and hence have positive Yamabe invariant, this is not clear for those secondary Hopf surfaces in class (2).

Note, all of the known results in the previous paragraph are consistent with what one would expect the non-K\"ahler analogue of LeBrun's Theorem would predict: surfaces with Kodaira dimension $-\infty$ have positive Yamabe invariant. Modulo the status of the global spherical shell conjecture, the only non-K\"ahler surfaces which could possibly violate this prediction are secondary Hopf surfaces in class (2), Inoue surfaces, and their blowups.

While complex surfaces are the focus of this article, the Yamabe invariant has been computed for many four-manifolds which do not admit a complex structure, see \cite{IL}, \cite{IMN}, \cite{Petean1}, \cite{PR}, \cite{Sung}, and \cite{Suv}.

\section{Inoue Surfaces, Kodaira Surfaces, and their blowups}

In this section we prove the main result: the Yamabe invariants of Inoue surfaces and their blowups are zero. We begin by first noting that work of Paternian and Petean implies that the Yamabe invariant of these manifolds is non-negative.

Recall that a \textit{$\mathcal{T}$-structure} on a closed smooth manifold $M$ is a finite open cover $\{U_1, \dots, U_N\}$ and a non-trivial torus action on each $U_i$ such that each intersection $U_{i_1}\cap\dots\cap U_{i_k}$ is invariant under the torus actions on $U_{i_1}, \dots, U_{i_k}$ and the torus actions commute. This notion was introduced by Cheeger and Gromov in \cite{CG} as a special case of a more general notion called an $\mathcal{F}$-structure, although they stated the definition using different terminology.

A result of Paternain and Petean \cite[Theorem 7.2]{PP1} asserts that, provided $\dim M > 2$, the existence of a $\mathcal{T}$-structure implies $Y(M) \geq 0$. They also showed that all Inoue surfaces admit a $\mathcal{T}$-structure \cite[Theorem B]{PP2}. The simplest case is those Inoue surfaces of type $S_M^+$ (or $S_M^-$ as they are diffeomorphic). Any such surface is the mapping torus of the self-diffeomorphism of $T^3$ induced by $M^T$ . Let $p\colon S_M^+ \to S^1$ denote the projection, then let $U_1 = p^{-1}(S^1\setminus\{1\})$ and $U_2 = p^{-1}(S^1\setminus\{-1\})$. Note that $U_1$ and $U_2$ are both diffeomorphic to $(0, 1) \times T^3$ so they admit effective torus actions acting by translations on the second factor. Moreover, the intersection $U_1\cap U_2$ is invariant under the torus actions, and as the diffeomorphism $T^3 \to T^3$ is linear, they commute and hence $X$ has a $\mathcal{T}$-structure.

As for the blowups of Inoue surfaces, note that $Y(\overline{\mathbb{CP}^2}) = 12\sqrt{2}\pi > 0$, so by a theorem of Kobayashi on the Yamabe invariant of a connected sum \cite[Theorem 2 (a)]{Kobayashi}, it follows that the blowups also have non-negative Yamabe invariant. Alternatively, as a non-trivial $S^1$-action is a $\mathcal{T}$-structure, we see that $\overline{\mathbb{CP}^2}$ admits a $\mathcal{T}$-structure, and hence so do the blowups of Inoue surfaces by \cite[Theorem 5.9]{PP1}. In summary, we have the following:

\begin{proposition}
\label{Yamabenn}
Let $M$ be an Inoue surface or a blownup Inoue surface. Then $Y(M) \geq 0$.
\end{proposition}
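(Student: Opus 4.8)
The plan is to exhibit a $\mathcal{T}$-structure on each such $M$ and then invoke the result of Paternain and Petean \cite[Theorem 7.2]{PP1} that a $\mathcal{T}$-structure on a closed manifold of dimension greater than two forces $Y(M) \geq 0$. Every Inoue surface is a real four-manifold, and blowing up preserves the dimension, so the hypothesis $\dim M > 2$ is automatic; the entire problem therefore reduces to producing the required torus actions.

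First I would treat the unblown-up Inoue surfaces. By Proposition \ref{Inouemt}, each is a mapping torus of a three-dimensional nilmanifold $F$ via a projection $p \colon M \to S^1$. The strategy is to cover $S^1$ by the two arcs $S^1 \setminus \{1\}$ and $S^1 \setminus \{-1\}$, pull them back to open sets $U_1, U_2 \cong (0,1) \times F$, and let a torus act through the fiberwise symmetries of $F$. For $F = T^3$ (the $S_M^\pm$ families) these are the translation actions, and since the monodromy is induced by the linear map $M^T$, the two restricted actions agree on the overlap and commute, so the axioms of a $\mathcal{T}$-structure hold directly. For the remaining families $F$ is the Chern-class-$r$ circle bundle over $T^2$, i.e. a Heisenberg nilmanifold, and one instead uses the central circle action as the local symmetry group, arranged to be compatible with the gluing. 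Rather than verify all four families by hand, I would cite Paternain and Petean \cite[Theorem B]{PP2}, who established that every Inoue surface carries a $\mathcal{T}$-structure, and present the $S_M^+$ construction as the representative case.

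For the blowups I would argue in either of two ways. Since $\overline{\mathbb{CP}^2}$ admits a nontrivial $S^1$-action, hence a $\mathcal{T}$-structure, the compatibility of $\mathcal{T}$-structures with connected sums \cite[Theorem 5.9]{PP1} shows that the connected sum of an Inoue surface with any number of copies of $\overline{\mathbb{CP}^2}$ inherits a $\mathcal{T}$-structure and so has non-negative Yamabe invariant. Alternatively, because $Y(\overline{\mathbb{CP}^2}) = 12\sqrt{2}\pi > 0$, Kobayashi's connected-sum estimate \cite[Theorem 2(a)]{Kobayashi} yields the same conclusion without reference to $\mathcal{T}$-structures. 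Combining the two cases gives $Y(M) \geq 0$ for every Inoue surface and every blownup Inoue surface.

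The main obstacle is the verification that the local torus actions on the more intricate fibers -- the Heisenberg nilmanifolds occurring for the $S_{N,p,q,r,t}^+$ and $S_{N,p,q,r}^-$ families -- can be chosen to commute on overlaps and to be preserved by the mapping-torus monodromy $g_0$; this is exactly where the nilpotent fiber geometry must be reconciled with the gluing diffeomorphism, and it is the content subsumed by \cite[Theorem B]{PP2}. Once the $\mathcal{T}$-structure is in hand, the passage to $Y(M) \geq 0$ and the treatment of blowups are purely formal.
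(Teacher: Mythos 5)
Your proposal is correct and follows essentially the same route as the paper: Paternain--Petean's result that a $\mathcal{T}$-structure forces $Y(M) \geq 0$ \cite[Theorem 7.2]{PP1}, their Theorem B in \cite{PP2} for the existence of $\mathcal{T}$-structures on Inoue surfaces (with the $S_M^\pm$ mapping-torus case worked explicitly), and for blowups either Kobayashi's connected-sum theorem or the connected-sum compatibility of $\mathcal{T}$-structures via the $S^1$-action on $\overline{\mathbb{CP}^2}$. The paper presents exactly these two alternatives for the blowup case as well, so there is nothing to add.
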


So all that remains to be shown is that these surfaces do not admit metrics of positive scalar curvature. As all Inoue surfaces are mapping tori by Proposition \ref{Inouemt}, we first note the following elementary fact:

\begin{proposition}
\label{fibre}
Let $F$ be a closed orientable $n$-dimensional manifold, and let $p\colon M_f \to S^1$ be the mapping torus of some orientation-preserving homeomorphism $f\colon F \to F$. The inclusion $F \hookrightarrow M_f$ induces an injection $H_n(F; \mathbb{Z}) \to H_n(M_f; \mathbb{Z})$. Moreover, the image of $[F]$ is primitive.
\end{proposition}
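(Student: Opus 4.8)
The plan is to extract both statements from the Wang exact sequence of the fibration $F \hookrightarrow M_f \xrightarrow{p} S^1$. A mapping torus fibers over the circle with fiber $F$ and monodromy $f$, and the associated Wang sequence in integral homology reads
$$\cdots \to H_k(F) \xrightarrow{f_* - \id} H_k(F) \xrightarrow{i_*} H_k(M_f) \xrightarrow{\partial} H_{k-1}(F) \xrightarrow{f_* - \id} \cdots,$$
where $i\colon F \hookrightarrow M_f$ is the inclusion of a fiber. Since $F$ is a closed connected orientable $n$-manifold, $H_n(F;\mathbb{Z}) \cong \mathbb{Z}$ is generated by the fundamental class $[F]$, and $f_*$ on $H_n(F)$ is multiplication by $\deg f$. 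Because $f$ is an orientation-preserving homeomorphism it has degree $+1$, so $f_* = \id$ and hence $f_* - \id = 0$ on $H_n(F)$.

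First I would read off injectivity. Exactness of the Wang sequence at the copy of $H_n(F)$ that is the source of $i_*$ gives $\ker i_* = \operatorname{im}(f_* - \id) = 0$, so $i_* \colon H_n(F;\mathbb{Z}) \to H_n(M_f;\mathbb{Z})$ is injective. (As a consistency check, $H_{n+1}(F) = 0$ together with the vanishing of $f_* - \id$ forces $\partial\colon H_{n+1}(M_f) \xrightarrow{\cong} H_n(F) \cong \mathbb{Z}$, as expected for the closed orientable manifold $M_f$.)

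For primitivity I would pass to Poincar\'e duality on the closed oriented $(n+1)$-manifold $M_f$, which is orientable precisely because $f$ preserves the orientation of $F$. Writing $\theta \in H^1(S^1;\mathbb{Z})$ for the generator and $\alpha = p^*\theta \in H^1(M_f;\mathbb{Z})$, the key input is that the fiber is Poincar\'e dual to $\alpha$, i.e.\ $\alpha \frown [M_f] = i_*[F]$ up to sign; this is the standard fact that the preimage of a point under $p$ is dual to the pullback of the dual of that point, and here the fiber has the trivial product neighborhood $F \times (-\varepsilon,\varepsilon)$. Since capping with $[M_f]$ is an isomorphism $H^1(M_f;\mathbb{Z}) \xrightarrow{\cong} H_n(M_f;\mathbb{Z})$, primitivity of $i_*[F]$ is equivalent to primitivity of $\alpha$. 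The latter is immediate: choosing a loop $\gamma$ in $M_f$ whose projection wraps once around $S^1$ (the suspension loop of a basepoint, available since $p_*\colon \pi_1(M_f) \to \pi_1(S^1)$ is onto), one has $\langle \alpha, [\gamma]\rangle = \langle \theta, p_*[\gamma]\rangle = 1$, so $\alpha$ pairs to a unit and cannot be a nontrivial multiple.

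I expect the main obstacle to be justifying that $\alpha$ is the Poincar\'e dual of the fiber in the topological rather than smooth category, since $f$ is only a homeomorphism and geometric transversality is not directly available. I would handle this homologically: using the trivial product neighborhood of the fiber and excision, $H_{*}(M_f, M_f \setminus F) \cong H_{*-1}(F)$ (a Thom isomorphism for a trivial line bundle), and tracking $[F]$ through the resulting diagram identifies $i_*[F]$ with $\alpha \frown [M_f]$. Equivalently, one can bypass duality and simply observe that the intersection pairing $H_n(M_f;\mathbb{Z}) \times H_1(M_f;\mathbb{Z}) \to \mathbb{Z}$ sends $(i_*[F], [\gamma])$ to $\pm 1$, which forces $i_*[F]$ to be primitive directly.
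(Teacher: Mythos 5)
Your proof is correct, and its primitivity half takes a genuinely different route from the paper's. For injectivity the two arguments essentially coincide: the Wang sequence you invoke is exactly what the paper derives by hand from the Mayer--Vietoris sequence of the cover $U = p^{-1}(S^1\setminus\{i\})$, $V = p^{-1}(S^1\setminus\{-i\})$, and in both cases the key input is that $f_* = \id$ on $H_n(F;\mathbb{Z}) \cong \mathbb{Z}$ because $f$ is an orientation-preserving homeomorphism. For primitivity, however, the paper never leaves the Mayer--Vietoris sequence: assuming $\iota_*[F] = a\gamma$, it pushes $\gamma$ forward to $H_{n-1}(U\cap V) \cong H_{n-1}(F\sqcup F)$, observes that this group is torsion-free (a standard fact for closed orientable manifolds), concludes by exactness that $\gamma$ itself lies in the image of $\iota_*$, and then uses the already-established injectivity to force $a = \pm 1$. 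You instead pass to Poincar\'e duality on the closed oriented $(n+1)$-manifold $M_f$, identify $i_*[F]$ with $\pm\, p^*\theta \frown [M_f]$, and detect primitivity by pairing $p^*\theta$ with a loop whose projection generates $\pi_1(S^1)$ (one small imprecision: the ``suspension loop of a basepoint'' is not literally a loop unless the basepoint is fixed by $f$; it must be closed up by a path in the connected fiber, which is what makes $p_*$ surjective on $\pi_1$). Your route is more conceptual and gives slightly more information --- it identifies the image of $[F]$ as the Poincar\'e dual of $p^*\theta$ --- but its crux is precisely the duality statement you flag, namely that the fiber is dual to $p^*\theta$ in the topological category; your sketch via the product neighbourhood, excision, and the Thom isomorphism is the standard way to justify this and does go through, though note that your proposed ``bypass'' via intersection numbers is not really independent, since evaluating the pairing of $i_*[F]$ against $[\gamma]$ rests on the same duality identification. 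What the paper's argument buys is elementarity: no cap products and no duality on $M_f$ at all, only torsion-freeness of $H_{n-1}$ of the fiber, at the cost of a less illuminating diagram chase.
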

\begin{proof}
Letting $U = p^{-1}(S^1\setminus\{i\})$ and $V = p^{-1}(S^1\setminus\{-i\})$, we have inclusion maps $k^U\colon U\cap V \to U$, $k^V\colon U\cap V \to V$, $\ell^U\colon U \to M_f$, and $\ell^V\colon V \to M_f$. Identifying $F$ with $p^{-1}(1)$, we also have inclusion maps $j : F \to U$ and $\iota : F \to M_f$. Now consider the following diagram (all homology groups are with $\mathbb{Z}$ coefficients)
$$\begin{tikzcd}[row sep = large, column sep = large]
H_{n+1}(M_f) \arrow{r} & H_n(U\cap V) \arrow{r}{(k^U_*, k^V_*)} & H_n(U)\oplus H_n(V) \arrow{r}{\ell^U_* - \ell^V_*} & H_n(M_f) \arrow{r} & H_{n-1}(U\cap V)\\
 & & H_n(F) \arrow{u}{(j_*, 0)} \arrow{ur}[swap]{\iota_*} & & 
\end{tikzcd}$$
The first row is exact by Mayer-Vietoris, and as $(\ell^U_* - \ell^V_*)\circ(j_*, 0) = \ell^U_*\circ j_* - \ell^V_*\circ 0= (\ell^U\circ j)_* = \iota_*$, the triangle commutes.

Note that $H_{n+1}(M_f) \cong \mathbb{Z}$, $H_n(U\cap V) \cong H_n(F\sqcup F) \cong \mathbb{Z}^2$, and $H_n(U) \cong H_n(V) \cong H_n(F) \cong \mathbb{Z}$. As each connected component of $U\cap V$ includes into both $U$ and $V$, the map $\mathbb{Z}^2 \to \mathbb{Z}^2$ is multiplication by $\begin{bsmallmatrix} 1 & 1\\ 1 & 1\end{bsmallmatrix}$. The kernel of this map is the span of $\begin{bsmallmatrix}1\\ -1\end{bsmallmatrix}$, so by exactness, the map $\mathbb{Z} \to \mathbb{Z}^2$ is multiplication by $\begin{bsmallmatrix}1\\ -1\end{bsmallmatrix}$. Furthermore, $H_n(F) \cong \mathbb{Z}$ and the map $(j_*, 0)$ corresponds to multiplication by $\begin{bsmallmatrix}1\\ 0\end{bsmallmatrix}$. As $\begin{bsmallmatrix}1\\ 0\end{bsmallmatrix}$ is not in the image of the map $\mathbb{Z}^2 \to \mathbb{Z}^2$, it follows that $\iota_*$ is injective and has the same image as $\ell^U_* - \ell^V_*$.

Suppose now that $\iota_*[F] = a\gamma$ for some $\gamma \in H_n(M_f)$. Then the image of $\gamma$ under $H_n(M_f) \to H_{n-1}(U\cap V)$ is $a$-torsion, but $H_{n-1}(U\cap V) \cong H_{n-1}(F\sqcup F)$ is torsion-free, so the image of $\gamma$ is zero. By exactness, $\gamma$ is in the image of $\ell_U^* - \ell_V^*$ and hence in the image of $\iota_*$. As $[F]$ generates $H_n(F)$, we see that $\gamma = \iota_*(b[F])$ for some $b \in \mathbb{Z}$, but then $\iota_*[F] = a\gamma = \iota_*(ab[F])$. It follows from the injectivity of $\iota_*$ that $a = \pm 1$, and hence $\iota_*[F]$ is primitive.
\end{proof}

In the case that $b_n(M_f) = 1$, as is the case for Inoue surfaces, we obtain the following corollary.

\begin{corollary}
\label{generates}
Let $F$ be a closed orientable $n$-dimensional manifold, and let $p\colon M_f \to S^1$ be the mapping torus of some orientation-preserving homeomorphism $f\colon F \to F$. Suppose that $b_n(M_f) = 1$. Then $H_n(M_f; \mathbb{Z})$ is generated by the image of $[F]$ under the map $H_n(F; \mathbb{Z}) \to H_n(M_f; \mathbb{Z})$ induced by the inclusion $F \hookrightarrow M_f$.
\end{corollary}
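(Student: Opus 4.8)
The plan is to upgrade Proposition~\ref{fibre} using the hypothesis $b_n(M_f) = 1$, the key extra input being that $H_n(M_f;\mathbb{Z})$ is torsion-free. Once torsion is ruled out, the corollary reduces to the trivial observation that a primitive element of $\mathbb{Z}$ is a generator. First I would record that $M_f$, being the mapping torus of an orientation-preserving homeomorphism of a closed orientable $n$-manifold, is itself a closed orientable manifold of dimension $n+1$. By Poincar\'e duality, $H_n(M_f;\mathbb{Z}) \cong H^1(M_f;\mathbb{Z})$, and by the universal coefficient theorem $H^1(M_f;\mathbb{Z}) \cong \operatorname{Hom}(H_1(M_f;\mathbb{Z}),\mathbb{Z})$, since the relevant $\operatorname{Ext}$ term vanishes ($H_0$ being free). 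As the group of homomorphisms from a finitely generated abelian group into $\mathbb{Z}$ is free abelian, $H_n(M_f;\mathbb{Z})$ is free abelian; since its rank is $b_n(M_f) = 1$, we conclude $H_n(M_f;\mathbb{Z}) \cong \mathbb{Z}$.

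With torsion eliminated, I would simply invoke Proposition~\ref{fibre}: the inclusion $F \hookrightarrow M_f$ induces an injection $H_n(F;\mathbb{Z}) \to H_n(M_f;\mathbb{Z})$ whose image is generated by the primitive element $\iota_*[F]$. A primitive element of $\mathbb{Z}$ is necessarily $\pm 1$ times a generator, hence a generator, so $\iota_*[F]$ generates $H_n(M_f;\mathbb{Z})$, as claimed.

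The step I expect to be the main obstacle—or at least the one demanding care—is precisely the appeal to torsion-freeness, which is essential rather than cosmetic. Primitivity by itself does \emph{not} force an element to generate a rank-one group once torsion is present: in $\mathbb{Z} \oplus \mathbb{Z}/2$ the element $(2,1)$ is primitive yet generates only an index-four subgroup. One might hope to avoid Poincar\'e duality by feeding $b_n(M_f)=1$ into the Wang sequence of the fibration $F \to M_f \to S^1$, which yields a short exact sequence $0 \to \mathbb{Z} \xrightarrow{\iota_*} H_n(M_f;\mathbb{Z}) \to K \to 0$ with $K = \ker\big((f_*-\operatorname{id})|_{H_{n-1}(F)}\big)$ finite; but this alone does not kill $K$, since a split extension $\mathbb{Z}\oplus K$ is compatible with $\iota_*[F]$ being primitive. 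Thus Poincar\'e duality, through its identification of $H_n$ with the always-free group $H^1$, is the genuinely necessary ingredient, and everything else is a formal consequence of the structure of finitely generated abelian groups.
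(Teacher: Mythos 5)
Your proof is correct, and it supplies a justification that the paper itself leaves implicit: Corollary \ref{generates} is stated there as an immediate consequence of Proposition \ref{fibre}, with no separate argument given. You are also right that primitivity alone is not enough once torsion is allowed (your example $(2,1) \in \mathbb{Z}\oplus\mathbb{Z}/2$ is apt), so some torsion-freeness input is genuinely required. Your route --- Poincar\'e duality on the closed orientable $(n+1)$-manifold $M_f$ together with the universal coefficient theorem, giving $H_n(M_f;\mathbb{Z}) \cong H^1(M_f;\mathbb{Z}) \cong \operatorname{Hom}(H_1(M_f;\mathbb{Z}),\mathbb{Z})$, which is free abelian, hence $\cong \mathbb{Z}$ when $b_n(M_f)=1$ --- settles this cleanly.

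Where you go astray is in the closing methodological claim. The proof of Proposition \ref{fibre} already contains the needed torsion-freeness without any duality on $M_f$: it shows $\operatorname{im}\iota_* = \operatorname{im}(\ell^U_* - \ell^V_*)$, which by exactness of Mayer--Vietoris is the kernel of the connecting map $H_n(M_f) \to H_{n-1}(U\cap V)$. Hence the quotient $H_n(M_f)/\operatorname{im}\iota_*$ embeds into $H_{n-1}(U\cap V) \cong H_{n-1}(F)\oplus H_{n-1}(F)$, which is torsion-free --- the very fact about closed orientable $n$-manifolds the paper invokes in that proof. When $b_n(M_f)=1$ this quotient has rank zero, so it vanishes and $\iota_*[F]$ generates. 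The same point repairs your Wang-sequence remark: the group $K = \ker\bigl((f_*-\operatorname{id})|_{H_{n-1}(F)}\bigr)$ is a subgroup of the torsion-free group $H_{n-1}(F;\mathbb{Z})$, so once $b_n(M_f)=1$ forces $K$ to have rank zero, $K$ is trivial; the Wang sequence \emph{does} kill $K$, contrary to your assertion. So Poincar\'e duality applied to $M_f$ is not the ``genuinely necessary'' ingredient: what is needed is torsion-freeness of $H_{n-1}(F)$ (itself a duality statement, but for $F$, and already in hand), and this is presumably the argument the paper regards as immediate.
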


We will need the following lemma:

\begin{lemma}
\label{connected}
Let $M$ be a closed connected smooth manifold with a closed connected smooth hypersurface $\Sigma$. If $\Sigma$ is non-orientable, or $[\Sigma] \in H_{n-1}(M; \mathbb{Z})$ is non-zero, then $M\setminus\Sigma$ is connected.
\end{lemma}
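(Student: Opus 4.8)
The plan is to prove the contrapositive: assuming $M \setminus \Sigma$ is disconnected, I would show that $\Sigma$ is orientable and that its fundamental class maps to zero in $H_{n-1}(M;\mathbb{Z})$, which negates both hypotheses at once. Here $n = \dim M$, and I would use that $M$ is oriented (as it is in all our applications, where $M$ is a complex surface); this is precisely what lets two-sidedness of $\Sigma$ be promoted to orientability of $\Sigma$.

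First I would analyse a tubular neighbourhood $N$ of $\Sigma$, identified with the total space of the normal line bundle $\nu$. The complement $N \setminus \Sigma$ deformation retracts onto the associated sphere bundle $\widetilde{\Sigma} \to \Sigma$, a double cover of $\Sigma$; since $\Sigma$ is connected, $\widetilde{\Sigma}$ has one component when $\nu$ is non-trivial ($\Sigma$ one-sided) and two components when $\nu$ is trivial ($\Sigma$ two-sided). Because $M$ is connected and $\Sigma \neq \emptyset$, no component of $M \setminus \Sigma$ can be closed in $M$, so the closure of every component meets $\Sigma$ and hence meets $N \setminus \Sigma$; as each component of $N \setminus \Sigma$ lies in a single component of $M \setminus \Sigma$, the number of components of $M \setminus \Sigma$ is at most the number of components of $\widetilde{\Sigma}$, hence at most two. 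Disconnectedness therefore forces $\widetilde{\Sigma}$ to be disconnected, so $\Sigma$ is two-sided and $M \setminus \Sigma$ has exactly two components $M_+$ and $M_-$.

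Two-sidedness then dispatches both hypotheses. For the first: since $M$ is oriented and $\nu$ is trivial, the splitting $TM|_\Sigma \cong T\Sigma \oplus \nu$ shows $T\Sigma$ is orientable, so $\Sigma$ is orientable; equivalently, a non-orientable $\Sigma$ must be one-sided, which by the previous step forces $M \setminus \Sigma$ to be connected. For the second, the collar furnished by the trivial normal bundle exhibits $\overline{M_+}$ as a compact manifold with boundary $\Sigma$, oriented compatibly with $M$. In the long exact sequence of the pair $(\overline{M_+}, \Sigma)$, the connecting homomorphism carries the fundamental class $[\overline{M_+}, \Sigma] \in H_n(\overline{M_+}, \Sigma;\mathbb{Z})$ to $[\Sigma] \in H_{n-1}(\Sigma;\mathbb{Z})$, so $[\Sigma]$ dies in $H_{n-1}(\overline{M_+};\mathbb{Z})$ by exactness and therefore maps to zero in $H_{n-1}(M;\mathbb{Z})$ under the inclusion. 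Thus $[\Sigma] = 0$, completing the contrapositive.

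The hard part will be the first step: relating the components of $M \setminus \Sigma$ to those of the normal sphere bundle $\widetilde{\Sigma}$ carefully enough to extract simultaneously that $\Sigma$ is two-sided and that there are exactly two sides. Once two-sidedness is secured, the orientability claim and the null-homology claim are routine consequences of the bundle splitting and the long exact sequence respectively; the underlying content is just the classical fact that a connected separating hypersurface of an oriented manifold is two-sided, orientable, and null-homologous.
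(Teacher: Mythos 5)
Your proof is correct, and its decisive step takes a genuinely different route from the paper's. The opening move agrees in substance -- both arguments identify the punctured tubular neighbourhood of $\Sigma$, up to homotopy, with a double cover of $\Sigma$ that is connected exactly when $\Sigma$ is one-sided -- but you run it through point-set topology (no component of $M\setminus\Sigma$ is closed in $M$, so each component absorbs a component of the punctured tube), whereas the paper reads the component count off the Mayer--Vietoris sequence in $H_0$, getting $k=1$ in the non-orientable case and $k\le 2$ in the orientable one. The real divergence is in showing that a separating $\Sigma$ is null-homologous: the paper constructs a cutoff function $\hat f$ equal to $0$ on one side and $1$ on the other, shows the exact form $d\hat f$ represents the Poincar\'e dual of $[\Sigma]$ in de Rham cohomology, and then descends to integral coefficients using torsion-freeness of $H^1(M;\mathbb{Z})$ and injectivity of $H^1(M;\mathbb{Z})\to H^1_{\mathrm{dR}}(M)$; you instead compactify one side into a compact oriented manifold $\overline{M_+}$ with $\partial\overline{M_+}=\Sigma$ and invoke $\partial[\overline{M_+},\Sigma]=\pm[\Sigma]$ in the long exact sequence of the pair, i.e.\ boundaries are null-homologous. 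Your route stays entirely in integral singular homology and needs neither de Rham theory nor Poincar\'e duality; its one delicate point is the assertion that $\overline{M_+}$ is a compact manifold whose boundary is \emph{all} of $\Sigma$, which follows by pigeonhole from your first step (each of the two components of $M\setminus\Sigma$ contains exactly one of the two collar sides, so the frontier of $M_+$ is the whole of $\Sigma$) and deserves an explicit sentence. Finally, your standing assumption that $M$ is oriented is a feature, not a restriction: the paper's proof uses the same assumption tacitly (when it calls the punctured tube the orientation double cover of $\Sigma$, and again when it trivialises the normal bundle), and some such hypothesis is genuinely necessary -- a Klein bottle $K$ bounds the solid Klein bottle, and inside the (non-orientable) double of that solid Klein bottle, $K$ is a non-orientable connected separating hypersurface -- while in the paper's application $M$ is a complex surface, so orientability holds automatically.
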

\begin{proof}
Let $U$ be a tubular neighbourhood of $\Sigma$ in $M$, and let $V = M\setminus\Sigma$. Then by Mayer-Vietoris, we have
$$\dots \to H_0(U\cap V) \to H_0(U)\oplus H_0(V) \to H_0(M) \to 0.$$
Note that $H_0(U) \cong H_0(M) \cong \mathbb{Z}$ as $\Sigma$ and $M$ are connected, and $H_0(V) \cong \mathbb{Z}^k$ where $k$ is the number of connected components of $V$. Also note that $U\cap V$ deformation retracts onto the orientation double cover of $\Sigma$, so if $\Sigma$ is non-orientable we have
$$\dots \to \mathbb{Z} \to \mathbb{Z}^{k+1} \to \mathbb{Z} \to 0$$
from which it immediately follows that $k = 1$, i.e. $V = M\setminus\Sigma$ is connected. If $\Sigma$ is orientable, we instead have
$$\dots \to \mathbb{Z}^2 \to \mathbb{Z}^{k+1} \to \mathbb{Z} \to 0$$
which implies that $k \leq 2$. Suppose that $k = 2$.

As $\Sigma$ and $M$ are orientable, the normal bundle is trivial, so there is a diffeomorphism $\phi\colon U \to (-1, 1)\times\Sigma$ such that $\phi|_{\Sigma}$ is the inverse of the inclusion $i\colon \Sigma \to M$. Let $M_-$ and $M_+$ be the connected components of $M\setminus\Sigma$ where $\phi(M_-\cap U) = (-1, 0)\times\Sigma$ and $\phi(M_+\cap U) = (0, 1)\times\Sigma$. Let $f\colon (-1, 1) \to \mathbb{R}$ be a non-decreasing smooth function such that $f|_{\left(-1, -\frac{1}{2}\right)} \equiv 0$ and $f|_{\left(\frac{1}{2}, 1\right)} \equiv 1$. Let $\hat{f}\colon M \to \mathbb{R}$ be the function given by $\hat{f}(p) = f(\operatorname{pr}_1(\phi(p)))$ for $p \in U$, and extended by $0$ on $M_-\setminus U$ and $1$ on $M_+\setminus U$. Then $d\hat{f}$ is a smooth one-form such that $d\hat{f}|_{M\setminus U} = 0$ and $(\phi^{-1})^*(d\hat{f}|_U) = g(t)dt$ where $g\colon (-1, 1) \to \mathbb{R}$ is a non-negative smooth function such that $g|_{\left(-1, -\frac{1}{2}\right)\cup\left(\frac{1}{2}, 1\right)} \equiv 0$ and $\int_{-1}^1g(t)dt = 1$. It follows that for any $(n - 1)$-form $\eta$ on $M$, we have $\int_M\eta\wedge d\hat{f} = \int_\Sigma i^*\eta$, so $d\tilde{f}$ is a representative of the Poincar\'e dual of $[\Sigma]$ under the map $\Phi\colon H^1(M; \mathbb{Z}) \to H^1_{\text{dR}}(M)$ given by change of coefficients followed by the de Rham isomorphism. As $H^1(M; \mathbb{Z})$ is torsion-free, the map $\Phi$ is injective, so $\operatorname{PD}([\Sigma]) = 0$ as $[d\hat{f}] = 0$ in $H^1_{\text{dR}}(M)$. Finally, as $H_{n-1}(M; \mathbb{Z})$ is torsion-free, we see that $[\Sigma] = 0$.
\end{proof}

We now briefly recall two methods which can be used to rule out the existence of positive scalar curvature metrics on a closed orientable smooth manifold $M$. 

\begin{enumerate} 
\item We say that $(M, g)$ is \textit{enlargeable} if for every $\varepsilon > 0$, there is an orientable Riemannian covering space which admits an $\varepsilon$-contracting map onto $(S^n, g_{\text{round}})$ which is constant at infinity and is of non-zero degree. As $M$ is compact, any two metrics are Lipschitz equivalent, so enlargeability is metric independent. The class of enlargeable manifolds is quite large, it includes manifolds of non-positive sectional curvature, nilmanifolds, and is closed under domination, i.e. if $M$ admits a map of non-zero degree onto an enlargeable manifold, then $M$ is enlargeable. Gromov and Lawson  proved that if $M$ is spin and enlargeable, then $M$ does not admit metrics of positive scalar curvature \cite[Theorem A]{GL1}.

\item Another approach, due to Schoen and Yau, is via stable minimal hypersurfaces. In the proof of \cite[Theorem 1]{SY79}, they showed that if $g$ is a positive scalar curvature metric on $M$, and $\Sigma$ is a closed stable minimal hypersurface of $(M, g)$, then $g|_{\Sigma}$ is conformal to a metric of positive scalar curvature. 
\end{enumerate}

We will combine these two approaches to prove the following:

\begin{theorem}
\label{Inouepsc}
Inoue surfaces and their blowups do not admit positive scalar curvature metrics.
\end{theorem}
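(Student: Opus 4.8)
The plan is to assume, for contradiction, that an Inoue surface or blowup $M$ carries a metric $g$ of positive scalar curvature, and then to combine the two methods recalled above: I would use the Schoen--Yau minimal hypersurface construction to descend from $M$ to a three-dimensional hypersurface $\Sigma$, and then obstruct positive scalar curvature on $\Sigma$ by enlargeability. The point of descending is that, although $M$ itself need not be spin (indeed the blowups are not), every closed orientable $3$-manifold is spin, so the Gromov--Lawson theorem (method (1)) applies to $\Sigma$ even when it does not apply to $M$. By Proposition~\ref{Yamabenn} we already know $Y(M)\ge 0$, so proving the nonexistence of positive scalar curvature metrics is exactly what is needed.

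First I set up the relevant homology. By Proposition~\ref{Inouemt}, $M$ is (for blowups, is built from) the mapping torus of an orientation-preserving diffeomorphism of a compact $3$-dimensional nilmanifold $F$, and $b_1(M)=1$, so $H^1(M;\mathbb{Z})\cong\mathbb{Z}$ and, by Poincar\'e duality together with Corollary~\ref{generates}, the fiber class $[F]$ generates $H_3(M;\mathbb{Z})\cong\mathbb{Z}$; it is primitive by Proposition~\ref{fibre}. I represent the generator of $H^1(M;\mathbb{Z})$ by a smooth map $h\colon M\to S^1$, whose Poincar\'e dual is $[F]$. Since $\dim M=4\le 7$, the Schoen--Yau geometric measure theory produces a smooth, embedded, two-sided, hence orientable, stable minimal hypersurface $\Sigma$ in the class $[F]$, and method (2) shows that $g|_\Sigma$ is conformal to a metric of positive scalar curvature on $\Sigma$. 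I would arrange $\Sigma$ to be connected, representing the primitive generator $[F]$; this is the delicate point, and it is where primitivity (Proposition~\ref{fibre}) enters.

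With $\Sigma$ connected and $[\Sigma]=[F]\ne 0$, Lemma~\ref{connected} gives that $M\setminus\Sigma$ is connected. Cutting $M$ along $\Sigma$ therefore yields a connected cobordism $W$ with $\partial W=\Sigma\sqcup\Sigma$, and the associated infinite cyclic cover $\bar M\to M$ is the infinite gluing $\cdots\cup_\Sigma W\cup_\Sigma W\cup_\Sigma\cdots$; since this cover is the pullback of $\mathbb{R}\to S^1$ along $h$, it is diffeomorphic to $F\times\mathbb{R}$, with projection $q\colon F\times\mathbb{R}\to F$. A single lift $\tilde\Sigma\cong\Sigma$ is a compact separating wall in $F\times\mathbb{R}$, hence homologous to a slice $F\times\{0\}$, so $q|_{\tilde\Sigma}$ is a degree $\pm 1$ map $\Sigma\to F$. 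Thus $\Sigma$ dominates the nilmanifold $F$. Nilmanifolds are enlargeable and enlargeability is closed under domination, so $\Sigma$ is enlargeable; being a closed orientable $3$-manifold it is spin, so by Gromov--Lawson it admits no metric of positive scalar curvature, contradicting the conclusion of the Schoen--Yau step. For a blowup $M=M_0\#k\overline{\mathbb{CP}^2}$ the exceptional spheres lie in balls and are homologically trivial in $H_3$, so they may be kept disjoint from $\Sigma$ and the identical argument applies; this is precisely the case where passing to the spin hypersurface $\Sigma$ is indispensable.

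The main obstacle I anticipate is the construction in the second paragraph: guaranteeing that the stable minimal representative of the primitive class $[F]$ can be taken connected, so that Lemma~\ref{connected} and the separating-wall computation apply and the degree is exactly $\pm 1$, rather than a disconnected or multiply-covered minimizer whose components might carry non-primitive multiples of $[F]$. Resolving this, via coherence of the mass-minimizer or by arguing directly that some component dominates $F$, is the crux on which the whole argument turns.
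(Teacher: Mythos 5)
Your overall strategy -- Schoen--Yau descent to a $3$-dimensional stable minimal hypersurface, then the spin/enlargeability obstruction on that hypersurface -- is exactly the paper's. But the step you yourself flag as the crux, namely arranging the stable minimal representative of the primitive class $[F]$ to be \emph{connected} so that the induced map to $F$ has degree $\pm 1$, is a genuine gap, and the paper's proof shows it is also an unnecessary one. The geometric measure theory only produces closed oriented stable minimal hypersurfaces $\Sigma_1, \dots, \Sigma_\ell$ and integers $m_1, \dots, m_\ell$ with $m_1[\Sigma_1] + \dots + m_\ell[\Sigma_\ell] = [F]$ (see \cite[Remark 3.4]{Lawson}); there is no control on connectedness of the total minimizer, and no known ``coherence'' statement of the kind you invoke. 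The paper's resolution is simply to take a single component $\Sigma := \Sigma_1$: it is connected by definition, satisfies $[\Sigma] \neq 0$, and since $b_3(M) = 1$, Corollary \ref{generates} forces $[\Sigma] = n[F]$ for some non-zero integer $n$. The resulting map $\Sigma \to F$ then has degree $n$, not necessarily $\pm 1$ -- but this is enough, because enlargeability is closed under domination, i.e.\ under maps of \emph{arbitrary non-zero degree}. So the primitivity from Proposition \ref{fibre} is not needed to normalise the degree; what matters is only that $[F]$ generates $H_3(M;\mathbb{Z}) \cong \mathbb{Z}$, so that every non-zero class is a non-zero multiple of $[F]$. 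Dropping your insistence on degree $\pm 1$ dissolves the obstacle on which, as you say, your whole argument turns.

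Your treatment of the blowups has a second gap. The minimal hypersurface in $M = X \# k\overline{\mathbb{CP}^2}$ is produced by minimization with respect to the given positive scalar curvature metric, so you cannot ``keep it disjoint'' from the connect-sum region; moreover, the infinite cyclic cover of a blowup is not $F\times\mathbb{R}$, as you claim, but rather $F\times\mathbb{R}$ connect-summed with a $\mathbb{Z}$'s worth of copies of $k\overline{\mathbb{CP}^2}$. The paper handles this by (i) identifying the cover obtained by cutting along $\Sigma$ with this explicit cover $M'$, which requires an argument: both covers correspond to surjections $\pi_1(M) \to \mathbb{Z}$, and $b_1(M) = 1$ forces the two surjections to have the same kernel, hence the covers are isomorphic; and (ii) composing with the degree-one collapse map $M' \to F\times\mathbb{R}$ followed by the projection to $F$, which is an isomorphism on $H_3$, so the lifted hypersurface still maps to $F$ with degree $n \neq 0$. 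With these two repairs -- take one component and accept non-zero degree, and map the cover of the blowup onto $F$ by collapsing rather than pretending the exceptional spheres are absent -- your argument becomes precisely the paper's proof of Theorem \ref{Inouepsc}.
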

\begin{proof}
Let $M$ denote the possibly blownup Inoue surface, and let $X$ denote the minimal model for $M$, so that $M$ is diffeomorphic to $X\# k\overline{\mathbb{CP}^2}$ for some $k\geq 0$. By Proposition \ref{Inouemt}, the manifold $X$ is a mapping torus of a three-dimensional nilmanifold $F$ (either a torus or a circle bundle over a two-dimensional torus).

Let $g$ be a positive scalar curvature metric on $M$. As $[F] \neq 0$ by Proposition \ref{fibre}, there are closed oriented stable minimal hypersurface $\Sigma_1, \dots, \Sigma_{\ell}$ and integers $m_1, \dots, m_{\ell}$ such that $m_1[\Sigma_1] + \dots + m_{\ell}[\Sigma_{\ell}] = [F]$, see \cite[Remark 3.4]{Lawson}. By construction, we have $[\Sigma_j] \neq 0$ for $j = 1, \dots, \ell$. Letting $\Sigma := \Sigma_1$, and noting that $b_3(M) = b_3(X) = 1$, we see that there is a non-zero integer $n$ such that $[\Sigma] = n[F]$ by Corollary \ref{generates}.


As $\Sigma$ and $M$ are orientable, the normal bundle is trivial, so there is an embedding $\phi\colon (-1, 1)\times\Sigma \to M$ such that $\phi(0, \cdot)$ is the inclusion $i\colon \Sigma \hookrightarrow M$. Note that $M\setminus\Sigma$ is a connected non-compact manifold by Lemma \ref{connected}. We can compactify $M\setminus\Sigma$ to obtain a manifold $Y$ with boundary $\partial Y = \Sigma\sqcup\Sigma$ and interior diffeomorphic to $M\setminus\Sigma$. To see this, let $\phi_-\colon (-1, 0)\times\Sigma \to M\setminus\Sigma$ and $\phi_+\colon (0, 1)\times\Sigma \to M\setminus\Sigma$ be the embeddings given by restricting $\phi$ appropriately and set $Y = (\phi([0, 1)\times\Sigma) \sqcup (M\setminus\Sigma) \sqcup \phi((-1, 0]\times\Sigma))/\sim$ where $\phi_+(t, p) \sim \phi(t, p)$ for $(t, p) \in (0, 1)\times\Sigma$ and $\phi_-(t, p) \sim \phi(t, p)$ for $(t, p) \in (-1, 0)\times\Sigma$. 

Fix a diffeomorphism $f\colon \Sigma\times\{0,1\} \to \partial Y$. Now let $Z = (Y\times\mathbb{Z})/\sim$ where $(f(s, 1), m) \sim (f(s, 0), m+1)$ for all $s \in \Sigma$ and $m \in \mathbb{Z}$. Note that the self-map of $Y\times\mathbb{Z}$ given by $(y, m) \mapsto (y, m+1)$ descends to a fixed-point free diffeomorphism of $Z$. The quotient of $Z$ by the $\mathbb{Z}$-action generated by this diffeomorphism can be naturally identified with $M$, so we have a regular covering $\rho\colon Z \to M$ with group of deck transformations $\mathbb{Z}$. Hence, there is a short exact sequence
$$0 \to \pi_1(Z) \xrightarrow{\rho_*} \pi_1(M) \xrightarrow{\alpha} \mathbb{Z} \to 0.$$
Using the fact that $X$ is a mapping torus, we can construct another covering. First note that there is a regular covering $F\times\mathbb{R} \to X$ with deck transformation group $\mathbb{Z}$ generated by $(x, t) \mapsto (f(x), t + 1)$ where $f$ is the diffeomorphism which gives rise to the mapping torus $X$. There is a corresponding regular covering $\tau\colon M' \to M$ where $M'$ is diffeomorphic to the connected sum of $F\times\mathbb{R}$ with a copy of $k\overline{\mathbb{CP}^2}$ at $(p, m+\frac{1}{2})$ for some fixed $p \in F$ and all $m \in \mathbb{Z}$. Hence, there is a short exact sequence
$$0 \to \pi_1(M') \xrightarrow{\tau_*} \pi_1(M) \xrightarrow{\beta} \mathbb{Z} \to 0.$$
As $b_1(M) = b_1(X) = 1$, we see that $\alpha, \beta \in \operatorname{Hom}(\pi_1(M), \mathbb{Z}) \cong H^1(M; \mathbb{Z}) \cong \mathbb{Z}$, so there is $k \in \mathbb{Z}$ such that $\alpha = k\beta$ or $\beta = k\alpha$. As $\alpha$ and $\beta$ are surjective, it follows that $k = \pm 1$ and hence $\pi_1(Z) \cong \ker\alpha = \ker\beta \cong \pi_1(M')$. Therefore, the two coverings are isomorphic; let $\Psi\colon Z \to M'$ be an isomorphism of covering spaces.


The inclusion of $\Sigma$ as one of the boundary components of $Y$ induces an inclusion $\iota\colon \Sigma \hookrightarrow Z$ such that $\rho\circ\iota = i$, so we have the following commutative diagram:
$$\begin{tikzcd}
\Sigma \arrow{r}{\iota} \arrow[swap]{drr}{i} & Z  \arrow{dr}{\rho} \arrow{r}{\Psi} & M' \arrow{d}{\tau}\\
 &  & M 
\end{tikzcd}$$
As $i_*[\Sigma] = n[F] \neq 0$, we see that $(\Psi\circ\iota)_*[\Sigma] \neq 0$. In fact, as $\tau_*$ is an isomorphism on $H_3$, we see that $(\Psi\circ\iota)_*[\Sigma] = n[F]$. Note that there is a map $g\colon M' \to F$ by first mapping to $F\times\mathbb{R}$ then projecting onto $F$, and $g_*$ is an isomorphism on $H_3$. Therefore $g\circ\Psi\circ\iota\colon \Sigma \to F$ is a map of three-manifolds with $(g\circ\Psi\circ\iota)_*[\Sigma] = n[F]$; that is, the map has degree $n \neq 0$. As $F$ is a nilmanifold, it is enlargeable, and therefore $\Sigma$ is also enlargeable. Moreover, as $\Sigma$ is a closed orientable three-manifold, it is spin, and hence does not admit a metric of positive scalar curvature. This is a contradiction as $\Sigma$ is a stable minimal hypersurface and $g$ is a metric of positive scalar curvature.
\end{proof}

The previous argument proves the following statement immediately:

\begin{theorem}
Let $X$ be the mapping torus of a diffeomorphism $f\colon F \to F$ where $F$ is a closed enlargeable three-manifold. If $b_1(X) = 1$, then $X\# N$ does not admit a positive scalar curvature metric for any closed orientable four-manifold $N$ with $b_1(N) = 0$.
\end{theorem}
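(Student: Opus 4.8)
The plan is to run the proof of Theorem \ref{Inouepsc} essentially verbatim, with two substitutions: the nilmanifold fiber is replaced by the arbitrary closed enlargeable three-manifold $F$ (whose enlargeability is now a hypothesis rather than something derived), and the summand $k\overline{\mathbb{CP}^2}$ is replaced by the arbitrary four-manifold $N$. Write $M = X \# N$ and suppose, for contradiction, that $g$ is a metric of positive scalar curvature on $M$. Taking $f$ orientation-preserving so that $X$, and hence $M$, is orientable, Proposition \ref{fibre} shows that the fiber class $[F]$ is nonzero and primitive in $H_3(X)$; since the connected sum with $N$ may be formed in the complement of a fiber, the class $[F]$ remains nonzero in $H_3(M)$.

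The one place requiring genuine care is the homological bookkeeping, and this is exactly where the hypothesis $b_1(N) = 0$ enters. By Poincar\'e duality $b_3(N) = b_1(N) = 0$ and $b_3(X) = b_1(X) = 1$, so $b_3(M) = b_3(X) + b_3(N) = 1$, and likewise $b_1(M) = 1$. With $b_3(M) = 1$ in hand I would argue exactly as before: invoke \cite[Remark 3.4]{Lawson} to express $[F]$ as an integral combination of classes of closed oriented stable minimal hypersurfaces of $(M,g)$, discard those that are null-homologous, and apply Corollary \ref{generates} to conclude that one of the remaining hypersurfaces, call it $\Sigma$, satisfies $[\Sigma] = n[F]$ for some nonzero integer $n$. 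Since $[\Sigma] \neq 0$, Lemma \ref{connected} guarantees that $M \setminus \Sigma$ is connected.

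Next I would reproduce the two-coverings argument. Cutting $M$ along $\Sigma$ and gluing back $\mathbb{Z}$ copies produces a regular $\mathbb{Z}$-cover $\rho\colon Z \to M$, while unrolling the mapping torus produces a second regular $\mathbb{Z}$-cover $\tau\colon M' \to M$, where $M'$ is diffeomorphic to $F \times \mathbb{R}$ connect-summed with one copy of $N$ per period. Because $b_1(M) = 1$ forces $\operatorname{Hom}(\pi_1(M), \mathbb{Z}) \cong \mathbb{Z}$, the two surjections $\pi_1(M) \to \mathbb{Z}$ classifying these covers agree up to sign, so $Z$ and $M'$ are isomorphic covers; fix an isomorphism $\Psi\colon Z \to M'$. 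The inclusion $\iota\colon \Sigma \hookrightarrow Z$ satisfies $\rho \circ \iota = i$, and since $b_3(N) = 0$ the summands contribute nothing in degree three, so the collapse-then-project map $h\colon M' \to F \times \mathbb{R} \to F$ induces an isomorphism on $H_3$, as does $\tau$. Tracing the classes, $h_* \Psi_* \iota_*[\Sigma] = n[F]$, so $h \circ \Psi \circ \iota\colon \Sigma \to F$ is a map of closed three-manifolds of degree $n \neq 0$.

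Finally, as $F$ is enlargeable by hypothesis and enlargeability is closed under nonzero-degree domination, $\Sigma$ is enlargeable; being a closed orientable three-manifold it is spin, so Gromov--Lawson \cite[Theorem A]{GL1} shows it admits no metric of positive scalar curvature. This contradicts the Schoen--Yau conclusion that the stable minimal hypersurface $\Sigma$ carries a metric conformal to one of positive scalar curvature. I expect the only real work to be the verification that the general summand $N$ does not disturb the homology in degrees one and three — namely that $[F]$ survives in $H_3(M)$, that $b_3(M) = 1$, and that $\tau_*$ and $h_*$ remain isomorphisms on $H_3$ — all of which hinge precisely on $b_1(N) = b_3(N) = 0$. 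Everything else is identical to the proof of Theorem \ref{Inouepsc}.
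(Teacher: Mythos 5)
Your proposal is correct and is exactly the paper's intended proof: the paper justifies this theorem by stating that the argument for Theorem \ref{Inouepsc} applies verbatim, which is precisely what you carry out, with the right observation that the hypotheses $b_1(X)=1$ and $b_1(N)=0$ (hence $H_3(N)=0$ by Poincar\'e duality) are what keep the degree-three homology bookkeeping, the identification of the two $\mathbb{Z}$-covers, and the degree-$n$ map $\Sigma \to F$ intact. Nothing further is needed.
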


\begin{remark}
It follows from Proposition IV.6.10 and Corollary IV.6.12 of \cite{LM} that $X\# N$ is weakly enlargeable, a concept which we will not define here. Every compact enlargeable manifold is weakly enlargeable, but it is not known if the converse is true. It is also not known if a non-spin weakly enlargeable manifold can admit metrics of positive scalar curvature.
\end{remark}

\begin{theorem}
Let $M$ be an Inoue surface or a blownup Inoue surface. Then $Y(M) = 0$, but it is not realised.
\end{theorem}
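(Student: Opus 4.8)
The plan is to combine the three preceding results—Proposition \ref{Yamabenn}, Theorem \ref{Inouepsc}, and Theorem \ref{realised}—to pin down both the value and the non-realisation. First I would establish that $Y(M) = 0$. By Proposition \ref{Yamabenn}, any Inoue surface or blownup Inoue surface satisfies $Y(M) \geq 0$. On the other hand, recall that $Y(M) > 0$ holds if and only if $M$ admits a metric of positive scalar curvature; since Theorem \ref{Inouepsc} asserts that Inoue surfaces and their blowups admit no such metric, we must have $Y(M) \leq 0$. Combining these two inequalities forces $Y(M) = 0$.

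The second, and more delicate, claim is that this value is \emph{not} realised. Here I would argue by contradiction. Suppose $Y(M)$ were realised by some conformal class $\mathcal{C}$; since $Y(M) = 0$ is non-positive, realisation means $\mathcal{C}$ contains a unit-volume metric $g$ with constant scalar curvature equal to $0$, i.e. a scalar-flat metric. The strategy is to show that the existence of such a metric leads back to the same enlargeability obstruction used in the proof of Theorem \ref{Inouepsc}. The key point to exploit is that a scalar-flat metric is a borderline case of the Schoen--Yau minimal hypersurface machinery: if $g$ has $s_g \equiv 0$ and $\Sigma$ is a stable minimal hypersurface representing a nonzero class, then $g|_\Sigma$ is conformal to a metric of \emph{non-negative} scalar curvature, and the stability/minimality conditions should force the restricted scalar curvature to be identically zero as well, since a metric of positive scalar curvature on the enlargeable (spin) three-manifold $\Sigma$ is already excluded. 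I would need to show that this forces $\Sigma$ to be Ricci-flat (equivalently flat, for a three-manifold), which contradicts the fact that the nilmanifold fibre $F$ (onto which $\Sigma$ maps with nonzero degree, exactly as in the proof of Theorem \ref{Inouepsc}) is not flat unless it is a torus—and even in the torus case, the degree-$n$ map combined with scalar-flatness should be incompatible with the strict positivity needed to realise the supremum.

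An alternative and perhaps cleaner route to non-realisation is to invoke the structure theory already used by LeBrun in the Kähler setting. The idea is that if $Y(M) = 0$ were realised by a scalar-flat metric $g$, then $g$ would be a Yamabe minimiser of a very rigid type; in dimension four one can often upgrade scalar-flatness at the supremum to Ricci-flatness using the sign of the Yamabe invariant together with the fact that any such minimiser is a critical metric. A Ricci-flat metric on $M$ would be Kähler (a four-dimensional Ricci-flat metric on a complex surface is either Kähler or hyperkähler up to covers), forcing $b_1(M)$ to be even and contradicting $b_1(M) = 1$ for Inoue surfaces and their blowups. I would present whichever of these two arguments requires the fewest auxiliary lemmas.

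I expect the main obstacle to be the non-realisation half, specifically justifying rigorously that a scalar-flat Yamabe minimiser on a four-manifold with $Y = 0$ must be Ricci-flat (or, in the minimal-hypersurface approach, that the induced scalar-flat structure on $\Sigma$ forces flatness). This requires either the second-variation analysis of the Yamabe functional at a scalar-flat critical metric or a careful borderline version of the Schoen--Yau argument; the positivity-of-scalar-curvature obstruction from Theorem \ref{Inouepsc} gives only the non-strict inequality, so squeezing out the contradiction at the boundary value $0$ is where the real work lies.
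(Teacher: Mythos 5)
You get $Y(M)=0$ exactly as the paper does: Proposition \ref{Yamabenn} gives $Y(M)\ge 0$, and Theorem \ref{Inouepsc} rules out positive scalar curvature, hence $Y(M)\le 0$. For non-realisation, your second (``alternative'') route is essentially the paper's proof, and the step you single out as the main obstacle is not where the work lies --- it is a standard citation: by Kazdan--Warner \cite[Lemma 5.2]{KW}, on a closed manifold admitting no positive scalar curvature metric, any scalar-flat metric is automatically Ricci-flat (otherwise it could be deformed to a metric of positive scalar curvature); no second-variation analysis of the Yamabe functional is required. The paper then invokes \cite[Theorem A]{LeBrun3}, which says precisely that a non-K\"ahler compact complex surface admits no Ricci-flat metric; this is the rigorous form of your ``Ricci-flat implies K\"ahler or hyperk\"ahler up to covers, so $b_1$ even'' remark --- a genuinely deep theorem, but one you may simply quote. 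Two cautions: Theorem \ref{realised} cannot be one of your ingredients, since it assumes $M$ is K\"ahler and Inoue surfaces are not (indeed, that the K\"ahler hypothesis matters is the point of the paper); and your first route, the borderline Schoen--Yau/enlargeability argument on a stable minimal hypersurface, is both vaguer and unnecessary --- scalar-flatness of $g$ on $M$ does not obviously yield scalar-flatness of the induced metric on $\Sigma$, and once the Kazdan--Warner lemma is available the minimal hypersurface machinery plays no role in this step.
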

\begin{proof}
Combining Proposition \ref{Yamabenn} and Theorem \ref{Inouepsc} gives $Y(M) = 0$. If the Yamabe invariant were realised, then $M$ would admit a scalar flat metric, but any such metric must be Ricci flat as $M$ does not admit positive scalar metrics \cite[Lemma 5.2]{KW}. By \cite[Theorem A]{LeBrun3}, non-K\"ahler surfaces cannot admit Ricci flat metrics.
\end{proof}

So we see that the K\"ahler hypothesis of Theorem \ref{LeBrun} is necessary.

We take this opportunity to record the following result, which was alluded to by LeBrun in \cite{LeBrun2}, and also follows from the extension of LeBrun's Theorem to the symplectic setting in \cite{S-S&T}:

\begin{theorem}
\label{Kodairapsc}
Let $M$ be a Kodaira surface or a blownup Kodaira surface. Then $Y(M) = 0$ and it is not realised.
\end{theorem}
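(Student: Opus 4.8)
The plan is to treat primary and secondary Kodaira surfaces separately, since their topology is quite different, while establishing the lower bound $Y(M)\ge 0$ and the non-realisation statement by uniform arguments. Throughout, write $X$ for the minimal model (a primary or secondary Kodaira surface) and $M = X\#k\overline{\mathbb{CP}^2}$ for a possibly blown-up copy, $k\ge 0$. As in the Inoue case, once $Y(M)=0$ is known the non-realisation is automatic: a realising conformal class would contain a scalar-flat metric, which must be Ricci-flat because $M$ carries no positive scalar curvature metric \cite[Lemma 5.2]{KW}; but $M$ is a non-K\"ahler complex surface (blowing up preserves the parity of $b_1$, and $b_1$ is odd for both primary and secondary Kodaira surfaces), so it admits no Ricci-flat metric by \cite[Theorem A]{LeBrun3}. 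Hence the crux is to prove $Y(M)=0$, that is, $Y(M)\ge 0$ together with the non-existence of positive scalar curvature metrics.

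For the lower bound I would produce a $\mathcal{T}$-structure and invoke \cite[Theorem 7.2]{PP1}, exactly as for Inoue surfaces. By Hasegawa \cite{Hasegawa} a primary Kodaira surface is a nilmanifold $G/\Gamma$ with $G$ the product of the three-dimensional Heisenberg group and $\mathbb{R}$; its two-dimensional centre descends to a free $T^2$-action (along the elliptic fibre), which is a $\mathcal{T}$-structure. A secondary Kodaira surface is an infra-nilmanifold finitely covered by a primary one and, having $b_1=1$, fibres over $S^1$ with three-dimensional nilmanifold fibre $F$; the central circle action on $F$ is preserved by the (Lie-automorphism) monodromy and so assembles into a $\mathcal{T}$-structure on the mapping torus, just as the fibrewise $T^3$-action does for surfaces of type $S_M^+$. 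Thus $Y(X)\ge 0$, and since $Y(\overline{\mathbb{CP}^2})>0$, Kobayashi's connected-sum estimate \cite[Theorem 2 (a)]{Kobayashi} gives $Y(M)\ge 0$ for all blowups.

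The substance is the non-existence of positive scalar curvature metrics, and here the two families require different tools. For secondary Kodaira surfaces I would invoke the criterion proved above, that the mapping torus of a diffeomorphism of an enlargeable closed three-manifold with $b_1=1$ admits no positive scalar curvature metric after connect-summing with any four-manifold of vanishing $b_1$: indeed $X$ is such a mapping torus (with $F$ an enlargeable three-dimensional nilmanifold and $b_1(X)=1$), so $M=X\#k\overline{\mathbb{CP}^2}$ carries no such metric. For primary Kodaira surfaces this route is unavailable, because $b_3(X)=b_1(X)=3\ne 1$ and the minimal-hypersurface argument of Theorem \ref{Inouepsc} relies on $b_3=1$ (so that a minimal hypersurface is forced to carry a multiple of the fibre class). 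Instead I would use Seiberg-Witten theory: by Proposition \ref{Kodairasymp} a primary Kodaira surface is symplectic, and a blowup of a symplectic four-manifold is again symplectic; since $c_1=0$ and $\chi=0$ force the signature to vanish, and $b_2=4$, we get $b^+=2>1$ (unchanged by blowing up), so Taubes's theorem provides a non-trivial Seiberg-Witten invariant, while Witten's vanishing theorem shows that a positive scalar curvature metric would force all Seiberg-Witten invariants to vanish once $b^+>1$ -- a contradiction. Combining the lower bound with the absence of positive scalar curvature metrics yields $Y(M)=0$ in every case.

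I expect the main obstacle to be the primary Kodaira blowups: the clean enlargeability and minimal-hypersurface machinery of Theorem \ref{Inouepsc} breaks down once $b_3>1$, so one is pushed into gauge theory, and the delicate point is to confirm that the symplectic structure, and with it the non-vanishing Seiberg-Witten invariant and the obstruction to positive scalar curvature, genuinely survives both the blowup operation and the verification that $b^+>1$ throughout. The secondary case and the two non-realisation steps, by contrast, should be routine given the results already established.
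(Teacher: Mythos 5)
Your proposal is correct, and most of it coincides with the paper's proof: the lower bound via $\mathcal{T}$-structures and Paternain--Petean (with Kobayashi's connected-sum theorem handling the blowups), the obstruction for primary Kodaira surfaces and their blowups via symplecticity (Proposition \ref{Kodairasymp}), Taubes's non-vanishing theorem, $b^+ = 2 > 1$, and the Seiberg--Witten vanishing argument, and the non-realisation step via \cite[Lemma 5.2]{KW} together with \cite[Theorem A]{LeBrun3}. (Your computation of $b^+ = 2$ from $c_1^2 = 2\chi + 3\sigma$, $\chi = 0$, $b_2 = 4$ agrees with the paper's shortcut $b^+ = 2h^{2,0} = 2$; and your observation that the minimal-hypersurface route is blocked for primary surfaces because $b_1 = 3$ is precisely why the paper also resorts to gauge theory there.) The one genuine divergence is the secondary case. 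The paper disposes of secondary Kodaira surfaces and their blowups in a single line: they are free quotients of (suitably blownup) primary Kodaira surfaces, and a positive scalar curvature metric on a quotient would pull back to one on the finite cover, which has already been excluded. You instead invoke the mapping-torus theorem proved right after Theorem \ref{Inouepsc}: since a secondary Kodaira surface is a solvmanifold with $b_1 = 1$ (Hasegawa \cite{Hasegawa}), it fibers over $S^1$ with three-dimensional nilmanifold fiber, which is enlargeable and orientable, so $X\# k\overline{\mathbb{CP}^2}$ admits no positive scalar curvature metric. This is a valid alternative: it avoids Seiberg--Witten theory entirely for the secondary family and reuses the machinery already built for Inoue surfaces, at the cost of needing the Mostow fibration of solvmanifolds to exhibit the mapping-torus structure, whereas the paper's covering argument needs nothing beyond the definition of a secondary Kodaira surface as a free quotient.
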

\begin{proof}
As in the Inoue surface case, Paternain and Petean showed that Kodaira surfaces admit $\mathcal{T}$-structures \cite[Theorem B]{PP2}, and hence Kodaira surfaces and their blowups have non-negative Yamabe invariant. 

Now note that by Proposition \ref{Kodairasymp}, primary Kodaira surfaces are symplectic. The complex blowup of a complex manifold is diffeomorphic to the symplectic blowup, so blownup primary Kodaira surfaces are symplectic. Such surfaces therefore have a non-trivial Seiberg-Witten invariant \cite{Taubes}, and as $b^+ = 2h^{2,0} = 2 > 1$, they cannot admit metrics of positive scalar curvature \cite[Corollary 2.3.8]{Nicolaescu}. The same is true of their free quotients, namely secondary Kodaira surfaces and their blowups.

Combining, we see that the Yamabe invariant of Inoue surfaces and their blowups is zero. Again by \cite[Theorem A]{LeBrun3}, non-K\"ahler surfaces cannot admit Ricci flat metrics and hence the Yamabe invariant is not realised.
\end{proof}

So we see that the K\"ahler hypothesis of Theorem \ref{realised} is also necessary.

Modulo the status of the global spherical shell conjecture, the only connected compact non-K\"ahler surfaces for which the Yamabe invariant is unknown are the secondary Hopf surfaces in class (2) of Theorem \ref{SecHopf}, properly elliptic non-K\"ahler surfaces, and their blowups. The author plans to address these remaining cases in a future paper.

\end{document}